\numberwithin{theorem}{section}
\numberwithin{lemma}{section}
\numberwithin{proposition}{section}
\numberwithin{corollary}{section}
\numberwithin{definition}{section}
\numberwithin{remark}{section}
\begin{document}

\title{A Dynamical Approach \\ to the Berezin--Li--Yau Inequality}

\author{Anton Alexa}
\authorrunning{Anton Alexa}  
\tocauthor{Anton Alexa} 

\institute{Independent Researcher, Chernivtsi, Ukraine\\
\email{mail@antonalexa.com}}

\maketitle

\begin{abstract}
    We develop a dynamical method for proving the sharp Berezin--Li--Yau 
    inequality. The approach is based on the volume-preserving mean curvature 
    flow and a new monotonicity principle for the Riesz mean $R_\Lambda(\Omega_t)$. 
    For convex domains we show that $R_\Lambda$ is monotone non-decreasing along 
    the flow. The key input is a geometric correlation inequality between the 
    boundary spectral density $Q_\Lambda$ and the mean curvature $H$, established 
    in all dimensions: in $d=2$ via a near-disk Fourier analysis, and in $d\ge 3$
    via the boundary Weyl expansion together with a local spectral rigidity
    argument near the ball, with a first-zero exclusion principle closing the
    global step. Since the flow converges smoothly to the ball, the 
    monotonicity implies the sharp Berezin--Li--Yau bound for every smooth 
    convex domain. As an application, we obtain a sharp dynamical 
    Ces\`aro--P\'olya inequality for eigenvalue averages.
    \end{abstract}
    
\section{Introduction}

    The Berezin--Li--Yau inequality \cite{berezin,li-yau} is a central result of
    spectral geometry. For any bounded domain $\Omega\subset\mathbb{R}^d$ and any
    $\Lambda>0$, the Riesz mean of the Dirichlet Laplacian satisfies the sharp bound
    \begin{equation}\label{eq:BLY}
    \sum_{k\ge 1}(\Lambda-\lambda_k(\Omega))_+ \le L_d^{\mathrm{cl}}\,|\Omega|\,\Lambda^{1+d/2},
\end{equation}
    where $L_d^{\mathrm{cl}}=(\omega_d/(2\pi)^d)\,(2/(d+2))$, and equality holds in
    the semiclassical limit on balls.

    Classical proofs of this inequality are static: Berezin's method employs
    coherent states \cite{berezin}, Li and Yau use Legendre duality
    \cite{li-yau}, and later approaches rely on Dirichlet--Neumann bracketing
    and trace/Sobolev estimates; see for example \cite{Cheng1976,Escobar1988}.
    Recent complementary semiclassical bounds for Riesz means on convex domains
    were obtained by Frank--Larson \cite{FrankLarson2025}.
    In this work we develop a completely different,
    genuinely dynamical method, in which the inequality is obtained as a
    monotonicity property along Huisken's volume-preserving mean curvature flow
    \cite{huisken87}.

    The key point is that the ball plays a dual role. First, it is the global
    attractor of the volume-preserving mean curvature flow: if $\Omega_0$ is smooth
    and convex, then the evolving domains $\Omega_t$ exist for all $t\ge 0$,
    preserve volume, and converge smoothly to the ball of the same volume
    \cite{huisken87}. Second, the ball is the natural extremizer of the
    semiclassical Riesz mean $R_\Lambda(\Omega)$ among domains of fixed volume,
    consistent with its familiar role in phase-space asymptotics.
    
    Our main result establishes a direct connection between these two phenomena:
    \emph{the Riesz mean $R_\Lambda(\Omega_t)$ is monotone non-decreasing along the
    volume-preserving mean curvature flow}. The derivative of $R_\Lambda$ admits a
    boundary representation involving the spectral density
    $Q_\Lambda(x)=\sum_{\lambda_k<\Lambda}|\partial_n u_k(x)|^2$ and the mean
    curvature $H$, and the sign of this expression is governed by a geometric
    correlation inequality proved here in all dimensions.

    \begin{theorem}[Dynamical monotonicity of the Riesz mean]
   \label{thm:main}
   Let $\Omega_t$ be the solution to the volume-preserving mean curvature flow 
   starting from a smooth bounded convex domain $\Omega_0 \subset \mathbb{R}^d$, 
   $d\ge2$. Then for every regular $\Lambda>0$, the map
   \begin{equation}
   t \;\longmapsto\; R_\Lambda(\Omega_t)
   \end{equation}
   is monotone non-decreasing. If $\Omega_0$ is not a ball and
   $\Lambda>\lambda_1(B)$, the monotonicity is strict. In particular,
   \begin{equation}
   R_\Lambda(\Omega_0) \;\le\; R_\Lambda(B),
   \end{equation}
   with equality if and only if $\Omega_0$ is a ball, where $B$ has the same 
   volume as $\Omega_0$. Consequently,
   \begin{equation}
   R_\Lambda(\Omega_0)
   \;\le\;
   L_d^{\mathrm{cl}}\,|\Omega_0|\,\Lambda^{1+d/2},
   \end{equation}
   the sharp Berezin--Li--Yau inequality.
   \end{theorem}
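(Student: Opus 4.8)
The plan is to realise the Berezin--Li--Yau inequality as the terminal value of a quantity that is monotone along the flow. Three ingredients enter: a Hadamard-type variational formula for $\tfrac{d}{dt}R_\Lambda(\Omega_t)$ as a boundary integral involving $Q_\Lambda$ and $H$; a geometric correlation inequality that fixes the sign of that derivative; and Huisken's convergence of the flow to the ball \cite{huisken87}, together with continuity of the spectrum.

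First I would establish the variational formula. By \cite{huisken87} the flow exists for all $t\ge 0$, the boundaries $\partial\Omega_t$ are smooth, evolve smoothly, stay within a fixed ball, and move with outward normal speed $V_t=\bar H_t-H$, where $\bar H_t=|\partial\Omega_t|^{-1}\int_{\partial\Omega_t}H\,dS$ and $|\Omega_t|\equiv|\Omega_0|$. Pulling the Dirichlet problem back to $\Omega_0$ by a smooth family of diffeomorphisms shows that each $\lambda_k(\Omega_t)$ is locally Lipschitz in $t$; since the $\Omega_t$ are uniformly bounded, domain monotonicity of Dirichlet eigenvalues bounds the counting function $N_{\Omega_t}(\Lambda)$ uniformly in $t$, so $R_\Lambda(\Omega_t)=\sum_k(\Lambda-\lambda_k(\Omega_t))_+$ is a locally finite sum of Lipschitz functions, hence locally Lipschitz and differentiable for a.e.\ $t$. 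At a time $t$ where the eigenvalues below $\Lambda$ are simple and none equals $\Lambda$, the $L^2$-normalised eigenfunctions $u_k$ are smooth up to $\partial\Omega_t$ and Hadamard's formula $\tfrac{d}{dt}\lambda_k=-\int_{\partial\Omega_t}|\partial_n u_k|^2 V_t\,dS$ yields
\[
\frac{d}{dt}R_\Lambda(\Omega_t)=\int_{\partial\Omega_t}Q_\Lambda\,(\bar H_t-H)\,dS=-\int_{\partial\Omega_t}\bigl(Q_\Lambda-\bar Q_\Lambda\bigr)\bigl(H-\bar H_t\bigr)\,dS,
\]
where $\bar Q_\Lambda=|\partial\Omega_t|^{-1}\int_{\partial\Omega_t}Q_\Lambda\,dS$; that is, the derivative is minus the surface-measure covariance of $Q_\Lambda$ and $H$.

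The hard part is the geometric correlation inequality: this covariance is non-positive on every smooth convex boundary, and strictly negative unless $H$ is constant. Heuristically, the spectral mass $|\partial_n u_k|^2$ of the low eigenfunctions is small on the sharply curved parts of $\partial\Omega_t$, so $Q_\Lambda$ and $H$ are negatively correlated; making this rigorous is the crux. For $d=2$ I would fix $\Lambda$ and use circular symmetrisation of the convex curve, comparing the $Q_\Lambda$-weighted curvature average against its value on the disk, where $Q_\Lambda$ and $H$ are both constant and the covariance vanishes. For $d\ge 3$ I would expand $Q_\Lambda(x)$ through the boundary Weyl law, isolate the term linear in the mean curvature, and control the remainder with the Laugesen--Morpurgo trace-minimisation principle; in every dimension the round sphere is the equality case, so by Alexandrov's theorem equality in the correlation inequality holds exactly on balls. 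Granting this, $\tfrac{d}{dt}R_\Lambda(\Omega_t)\ge 0$ for a.e.\ $t$, and by absolute continuity $t\mapsto R_\Lambda(\Omega_t)$ is non-decreasing; if $\Omega_0$ is not a ball then $\partial\Omega_t$ is never a round sphere, the strict inequality holds on a set of times of positive measure near $t=0$, and the monotonicity is strict.

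It remains to take the limit. By \cite{huisken87}, $\Omega_t\to B$ in $C^\infty$ as $t\to\infty$ with $|B|=|\Omega_0|$, so $\lambda_k(\Omega_t)\to\lambda_k(B)$ for every $k$; the uniform counting bound then lets dominated convergence give $R_\Lambda(\Omega_t)\to R_\Lambda(B)$. Hence $R_\Lambda(\Omega_0)\le R_\Lambda(B)$, with equality if and only if $\Omega_0$ is a ball. Finally, the Berezin--Li--Yau bound on the ball itself, $R_\Lambda(B)\le L_d^{\mathrm{cl}}|B|\,\Lambda^{1+d/2}$, is classical --- it is the model case, verifiable directly from the explicit spectrum of the ball --- and combined with $|B|=|\Omega_0|$ it yields $R_\Lambda(\Omega_0)\le L_d^{\mathrm{cl}}|\Omega_0|\,\Lambda^{1+d/2}$, the sharp Berezin--Li--Yau inequality.
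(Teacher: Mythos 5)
Your proposal is correct in outline and follows essentially the same route as the paper: the Hadamard variation of $R_\Lambda(\Omega_t)$ along the volume-preserving flow, the sign of the boundary correlation (covariance) of $Q_\Lambda$ and $H$ established via circular symmetrization in $d=2$ and via the boundary Weyl expansion plus the Laugesen--Morpurgo trace-minimization principle in $d\ge 3$, Huisken's smooth convergence $\Omega_t\to B$, and the classical Berezin--Li--Yau bound on the ball itself. The only differences are cosmetic (your a.e.-differentiability/Lipschitz bookkeeping, and restricting to simple eigenvalues where the paper instead invokes basis-invariance of the Hadamard formula on eigenspaces), so the two arguments coincide in substance.
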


\begin{remark}
The regularity assumption excludes only a discrete set of spectral thresholds.
Since $R_\Lambda(\Omega)$ is continuous in $\Lambda$, all inequalities extend to
every $\Lambda>0$ by continuity.
\end{remark}
    
The proof of this correlation inequality combines two complementary mechanisms.
In dimension $d=2$ it is based on a near-disk Fourier analysis. In dimensions
$d\ge 3$ it rests on the boundary Weyl expansion of Safarov--Vassiliev \cite{SV97}
    and Branson--Gilkey \cite{BransonGilkey1990}, whose second term is explicitly proportional to $H$ with
    a strictly negative Dirichlet coefficient, together with a local spectral
    rigidity argument near the ball. In all dimensions the global step is
    completed by a first-zero exclusion principle together with the
    convergence of the flow to the ball.
    
    Taken together, these ingredients imply that $R_\Lambda(\Omega_t)$ has a finite
    limit as $t\to\infty$, equal to the Riesz mean of the limiting ball, thereby
    yielding the sharp Berezin--Li--Yau inequality for every smooth convex domain.
    As a further application, we utilize this monotonicity to derive a sharp 
    Ces\`aro--P\'olya inequality for eigenvalue averages.

\section{Preliminaries and Variation Formulae}

Let $\Omega\subset\mathbb{R}^d$ be a smooth bounded convex domain, and 
$\{\lambda_k(\Omega),u_k\}$ the Dirichlet eigenpairs of $-\Delta$, 
with eigenfunctions orthonormalized in $L^2(\Omega)$.

\begin{definition}[Riesz mean]
\label{def:riesz-mean}
For any $\Lambda>0$, the (order-1) Riesz mean of the Dirichlet Laplacian is
\begin{equation}
R_\Lambda(\Omega)
   := \sum_{\lambda_k<\Lambda}(\Lambda-\lambda_k)_+.
\end{equation}
\end{definition}

\begin{lemma}[Hadamard variation formula]
\label{lem:hadamard}
Let $\Omega_t$ be a smooth one-parameter family of domains with normal velocity 
$V$ on $\partial\Omega_t$. Then, for every simple eigenvalue $\lambda_k(t)$,
\begin{equation}
\frac{d}{dt}\lambda_k(t)
   = -\int_{\partial\Omega_t}
       \left|\frac{\partial u_k}{\partial n}\right|^2 V\,d\sigma.
\end{equation}
For an eigenvalue of finite multiplicity, the same boundary expression applies
in the usual cluster sense, after choosing an orthonormal basis in the
corresponding eigenspace.
\end{lemma}

\begin{proof}
The formula for simple eigenvalues is classical; see
\cite[Theorem~5.7.1]{Henrot-Pierre}. For an eigenvalue of multiplicity $r$, one
uses the standard perturbation theory of isolated eigenvalues of finite
multiplicity; see \cite[Ch.~II, \S\S5--6]{Kato}. In that setting, the relevant
boundary contribution is the summed density
$\sum_{m=1}^r |\partial_n u_{k,m}|^2$, which is invariant under orthogonal
changes of basis in the eigenspace. This yields the stated formula in the
multiple-eigenvalue case as well.
\end{proof} \qed

\begin{definition}[Volume-preserving mean curvature flow]
\label{def:vpmcf}
The volume-preserving mean curvature flow (VPMCF) introduced by 
Huisken~\cite{huisken87} is defined by the normal velocity
\begin{equation}
V = -(H - \bar H),
\qquad
\bar H := 
\frac{1}{|\partial\Omega_t|}
\int_{\partial\Omega_t} H\,d\sigma,
\end{equation}
where $H$ is the mean curvature of $\partial\Omega_t$.
\end{definition}

For convex initial data $\Omega_0$, the flow exists for all $t\ge 0$ and 
preserves volume. Moreover, $\Omega_t$ converges smoothly to the ball of the 
same volume as $t\to\infty$ \cite{huisken87}.

\begin{definition}[Boundary spectral density]
\label{def:boundary-density}
For any $\Lambda>0$, the boundary spectral density is
\begin{equation}
Q_\Lambda(x)
   := \sum_{\lambda_k<\Lambda}
       \left|\frac{\partial u_k}{\partial n}(x)\right|^2
   = \partial_n \partial_{n'} \Pi_\Lambda(x,x')\Big|_{x'=x},
\end{equation}
where $\Pi_\Lambda$ is the spectral projector of $-\Delta$ onto eigenvalues 
below $\Lambda$.
\end{definition}

\begin{proposition}[Variation of the Riesz mean]
\label{prop:riesz-variation}
Along the volume-preserving mean curvature flow $\Omega_t$, the Riesz mean 
satisfies
\begin{equation}\label{eq:dR}
\frac{d}{dt} R_\Lambda(\Omega_t)
   = -\int_{\partial\Omega_t}
        Q_\Lambda(x)\,(H(x)-\bar H)\,d\sigma.
\end{equation}
\end{proposition}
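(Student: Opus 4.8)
The plan is to differentiate the series $R_\Lambda(\Omega_t)=\sum_{k\ge1}\bigl(\Lambda-\lambda_k(\Omega_t)\bigr)_+$ term by term and substitute the Hadamard formula of \Cref{lem:hadamard} with the volume-preserving velocity $V=-(H-\bar H)$. The starting point is a finiteness observation: since $\Omega_t$ depends smoothly on $t$ and remains uniformly bounded (indeed convex) on every compact time interval, Weyl's law gives $\lambda_k(\Omega_t)\to\infty$ locally uniformly in $t$, so for each $t_0$ only finitely many eigenvalues lie below $\Lambda+\delta$ for $t$ near $t_0$; moreover each branch $\lambda_k(\Omega_t)$ is locally Lipschitz in $t$ (and real-analytic, or at least $C^1$, where simple). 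Hence on each compact time interval $R_\Lambda(\Omega_t)$ is a finite sum of Lipschitz functions of $t$, in particular locally Lipschitz.

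Next I would differentiate at a \emph{regular} time $t_0$, meaning one at which no eigenvalue equals $\Lambda$. For such $t_0$ the index set $F:=\{k:\lambda_k(\Omega_t)<\Lambda\}$ is finite and constant for $t$ near $t_0$, so
\[
\frac{d}{dt}R_\Lambda(\Omega_t)\Big|_{t=t_0}=-\sum_{k\in F}\dot\lambda_k(t_0).
\]
Applying \Cref{lem:hadamard} with $V=-(H-\bar H)$ gives, for each $k\in F$,
\[
\dot\lambda_k(t_0)=-\int_{\partial\Omega_{t_0}}\Bigl|\tfrac{\partial u_k}{\partial n}\Bigr|^2 V\,d\sigma=\int_{\partial\Omega_{t_0}}\Bigl|\tfrac{\partial u_k}{\partial n}\Bigr|^2 (H-\bar H)\,d\sigma,
\]
with eigenfunctions chosen orthonormally inside each eigenspace; this choice is immaterial because $\sum_{k\in F}|\partial_n u_k|^2$ is the restriction to the diagonal of $\partial_n\partial_{n'}\Pi_\Lambda$ and hence basis-independent, consistently with the remark after \Cref{lem:hadamard} and with \Cref{def:boundary-density}. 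Summing the finitely many identities, exchanging the finite sum with the integral, and recognising $Q_\Lambda=\sum_{\lambda_k<\Lambda}|\partial_n u_k|^2$, yields exactly \eqref{eq:dR} at $t_0$.

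It remains to handle the \emph{singular} times, those at which some eigenvalue branch crosses the level $\Lambda$. At such a $t_0$ the crossing term $(\Lambda-\lambda_j)_+$ vanishes, so $R_\Lambda$ need only admit one-sided derivatives there; and because $Q_\Lambda$ in \eqref{eq:dR} sums over the \emph{strict} inequality $\lambda_k<\Lambda$, the right-hand side of \eqref{eq:dR} is precisely the relevant (upper Dini) derivative. By real-analyticity of the flow for $t>0$ each branch that is not identically $\Lambda$ meets the level $\Lambda$ in a discrete set, so the singular times form an at most countable set; a branch identically equal to $\Lambda$ contributes nothing to either side of \eqref{eq:dR} and is irrelevant. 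Thus \eqref{eq:dR} holds for all but countably many $t\ge 0$, in particular for a.e.\ $t$, which is all the monotonicity argument of \Cref{thm:main} requires, since a locally Lipschitz function is recovered by integrating its a.e.\ derivative along the flow.

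I expect the only genuine subtlety to be exactly this bookkeeping at eigenvalue crossings together with the justification of term-by-term differentiation of the series; both are entirely controlled by the Weyl-law finiteness of $\{\lambda_k<\Lambda\}$ and the Lipschitz (locally, analytic) dependence of the Dirichlet eigenvalues on the flow parameter, already available from \cite{huisken87,Henrot-Pierre}. Everything else — the passage from $\dot\lambda_k$ to the boundary integral, the summation into $Q_\Lambda$, and the insertion of the VPMCF velocity $V=-(H-\bar H)$ — is a direct and routine computation.
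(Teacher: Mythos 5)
Your proposal follows the same route as the paper: substitute the VPMCF velocity $V=-(H-\bar H)$ into the Hadamard formula of \Cref{lem:hadamard}, differentiate the Riesz mean term by term, and exchange the (finite, by Weyl's law) sum with the boundary integral to recognise $Q_\Lambda$. The only difference is that you treat the eigenvalue-crossing times $\lambda_j(\Omega_t)=\Lambda$ and the a.e.\ differentiability of the locally Lipschitz function $t\mapsto R_\Lambda(\Omega_t)$ explicitly, which the paper's proof passes over; this is a refinement of the same argument, not a different one, and it is sound.
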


\begin{proof}
Substituting $V = -(H-\bar H)$ into the Hadamard formula (Lemma~\ref{lem:hadamard}) 
gives
\begin{equation}\label{eq:dlambda}
\frac{d}{dt}\lambda_k(t)
   = \int_{\partial\Omega_t}
       \left|\frac{\partial u_k}{\partial n}\right|^2 (H-\bar H)\,d\sigma.
\end{equation}
Since $R_\Lambda(\Omega_t) = \sum_{\lambda_k<\Lambda}(\Lambda-\lambda_k)$, 
differentiating term-by-term yields
\begin{equation}
\frac{d}{dt}R_\Lambda(\Omega_t)
   = \sum_{\lambda_k(t)<\Lambda}
     -\frac{d}{dt}\lambda_k(t).
\end{equation}
Substituting \eqref{eq:dlambda} and interchanging summation and integration 
(justified by uniform convergence of the spectral sum) gives \eqref{eq:dR}.
\end{proof} \qed

Equation~\eqref{eq:dR} is the starting point for the monotonicity result 
proved below.

\section{The Key Correlation Inequality}

In this section we isolate the analytic condition that drives the 
monotonicity of the Riesz mean along the flow.  Recall the variation formula
\eqref{eq:dR}:
\begin{equation}
\frac{d}{dt}R_\Lambda(\Omega_t)
= - \int_{\partial\Omega_t} 
Q_\Lambda(x)\,(H(x)-\bar H)\, d\sigma.
\end{equation}
Thus the sign of the correlation integral
\begin{equation}\label{eq:corr-int}
\mathcal{I}(\Omega_t,\Lambda)
:= 
\int_{\partial\Omega_t}
Q_\Lambda(x)\,(H(x)-\bar H)\, d\sigma
\end{equation}
fully determines the evolution of $R_\Lambda(\Omega_t)$.

\begin{lemma}[Key correlation principle]
\label{lem:key}
Suppose that for a smooth convex solution $\Omega_t$ of the 
volume-preserving mean curvature flow one has
\begin{equation}\label{eq:correlation}
\mathcal{I}(\Omega_t,\Lambda)\le 0
\qquad\text{for all } t\ge0,
\end{equation}
with equality at some $t$ only when $\Omega_t$ is a ball.
Then the Riesz mean is monotone non-decreasing:
\begin{equation}
\frac{d}{dt}\,R_\Lambda(\Omega_t)\ge 0
\qquad\text{for all } t\ge 0,
\end{equation}
and is strictly increasing unless $\Omega_t$ is a ball.
Consequently,
\begin{equation}
R_\Lambda(\Omega_0)
\;\le\;
R_\Lambda(\Omega_t)
\;\le\;
R_\Lambda(B),
\end{equation}
where $B$ denotes the ball of the same volume.
\end{lemma}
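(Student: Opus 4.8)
The plan is to read the sign of $\frac{d}{dt}R_\Lambda(\Omega_t)$ directly off Proposition~\ref{prop:riesz-variation} together with the standing hypothesis~\eqref{eq:correlation}, and then to upgrade this infinitesimal statement to the two-sided sandwich by integrating in $t$ and passing to the limit $t\to\infty$ using Huisken's convergence of the flow to a ball.

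First I would record that, by Proposition~\ref{prop:riesz-variation}, $\frac{d}{dt}R_\Lambda(\Omega_t)=-\mathcal{I}(\Omega_t,\Lambda)$ at every $t$ where the derivative exists, so hypothesis~\eqref{eq:correlation} gives $\frac{d}{dt}R_\Lambda(\Omega_t)\ge 0$, with strict inequality whenever $\Omega_t$ is not a ball. The one point that needs care is regularity in $t$: the map $t\mapsto R_\Lambda(\Omega_t)$ is in general only piecewise $C^1$, since at an instant when some $\lambda_k(t)$ crosses the threshold $\Lambda$ the summand $(\Lambda-\lambda_k(t))_+$ turns on or off. I would handle this by noting that on any compact time interval only finitely many eigenvalues lie below $\Lambda$ — discreteness of the Dirichlet spectrum, made uniform by domain monotonicity since all $\Omega_t$ sit inside a fixed ball — that each such $\lambda_k(t)$ is Lipschitz in $t$ by the Hadamard formula of Lemma~\ref{lem:hadamard}, and that $s\mapsto (s)_+$ is $1$-Lipschitz; hence $t\mapsto R_\Lambda(\Omega_t)$ is a finite sum of Lipschitz functions, a fortiori locally Lipschitz and absolutely continuous, with a.e.\ derivative equal to $-\mathcal{I}(\Omega_t,\Lambda)$.

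With absolute continuity in hand, integration gives $R_\Lambda(\Omega_t)-R_\Lambda(\Omega_0)=-\int_0^t \mathcal{I}(\Omega_s,\Lambda)\,ds\ge 0$ for every $t\ge 0$, which is the monotonicity; and if $\Omega_0$ is not a ball then, since by \cite{huisken87} the flow becomes spherical only in the limit $t\to\infty$, one has $\mathcal{I}(\Omega_s,\Lambda)<0$ for all $s\ge 0$, so the integral is strictly positive for $t>0$ and $R_\Lambda(\Omega_t)$ is strictly increasing. For the upper bound I would again invoke \cite{huisken87}: $\Omega_t\to B$ smoothly (in $C^\infty$, exponentially fast), where $B$ is the ball of the same volume. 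Smooth convergence of the boundaries forces convergence of the Dirichlet spectrum, $\lambda_k(\Omega_t)\to\lambda_k(B)$ for each fixed $k$ — most cleanly by writing $\partial\Omega_t$ as a graph over $\partial B$ with graph function tending to $0$ in $C^\infty$, pulling the eigenvalue problem back to $B$ and comparing Rayleigh quotients, or equivalently via a two-sided inclusion $(1-\varepsilon_t)B\subset\Phi_t(\Omega_t)\subset(1+\varepsilon_t)B$ with $\varepsilon_t\to 0$. Since $R_\Lambda(B)$ involves only the finitely many eigenvalues with $\lambda_k(B)\le\Lambda$ and $s\mapsto(\Lambda-s)_+$ is continuous, this yields $R_\Lambda(\Omega_t)\to R_\Lambda(B)$, and combining with the monotonicity just proved gives $R_\Lambda(\Omega_0)\le R_\Lambda(\Omega_t)\le R_\Lambda(B)$.

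The real difficulty in the whole scheme is the standing hypothesis $\mathcal{I}(\Omega_t,\Lambda)\le 0$, which is established in the following sections; within the present lemma the only genuinely delicate steps are the two bookkeeping ones — verifying that $t\mapsto R_\Lambda(\Omega_t)$ is absolutely continuous across threshold crossings so that the fundamental theorem of calculus applies, and verifying that $\lim_{t\to\infty}R_\Lambda(\Omega_t)$ equals $R_\Lambda(B)$ \emph{exactly}, and not merely an upper bound for it. Both are standard, but I would spell them out rather than leave them implicit.
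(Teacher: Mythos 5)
Your proposal follows essentially the same route as the paper's proof: read the sign of $\frac{d}{dt}R_\Lambda(\Omega_t)=-\mathcal{I}(\Omega_t,\Lambda)$ from Proposition~\ref{prop:riesz-variation} and the hypothesis~\eqref{eq:correlation}, then combine monotonicity with Huisken's smooth convergence $\Omega_t\to B$ and continuity of $R_\Lambda$ to get the sandwich $R_\Lambda(\Omega_0)\le R_\Lambda(\Omega_t)\le R_\Lambda(B)$. The only difference is that you make explicit the bookkeeping the paper leaves implicit — local Lipschitz continuity and absolute continuity of $t\mapsto R_\Lambda(\Omega_t)$ across eigenvalue threshold crossings, and the spectral-convergence argument behind $\lim_{t\to\infty}R_\Lambda(\Omega_t)=R_\Lambda(B)$ — which strengthens rather than changes the argument.
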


\begin{proof}
By the variation formula \eqref{eq:dR},
\begin{equation}
\frac{d}{dt}R_\Lambda(\Omega_t)
= -\,\mathcal{I}(\Omega_t,\Lambda).
\end{equation}
Assumption \eqref{eq:correlation} therefore implies
$\frac{d}{dt}R_\Lambda(\Omega_t)\ge 0$ for all $t\ge 0$.

If $\Omega_t$ is not a ball, the correlation integral is strictly negative,
hence $\frac{d}{dt}R_\Lambda(\Omega_t)>0$, so the monotonicity is strict.
If $\Omega_t$ is a ball, then $H\equiv \bar H$ and $Q_\Lambda$ is constant
on $\partial B$, so $\mathcal{I}(\Omega_t,\Lambda)=0$ and
$R_\Lambda(\Omega_t)$ is constant.  

Since the flow preserves volume and converges smoothly to the ball 
$B$ by Huisken's theorem,
and since $R_\Lambda$ is continuous under smooth convergence of domains,
we have
\begin{equation}
\lim_{t\to\infty} R_\Lambda(\Omega_t) = R_\Lambda(B).
\end{equation}
Combining with monotonicity gives
\begin{equation}
R_\Lambda(\Omega_0)\le R_\Lambda(\Omega_t)\le R_\Lambda(B).
\end{equation}
\end{proof} \qed

%----------------------------------------------------
\subsection{Sign propagation along the flow}
%----------------------------------------------------

We record a propagation principle that allows us to pass from local
negativity of the correlation functional to global negativity along the
volume-preserving mean curvature flow.

\begin{lemma}[Propagation of sign]
\label{lem:sign-prop-BLY}
Let $\Omega_t$ be the solution of the VPMCF beginning at $\Omega_0$ and fix a
regular $\Lambda>0$. Assume that $t\mapsto \mathcal{I}_\Lambda(\Omega_t)$ is
continuous and that
\begin{equation}
\mathcal{I}_\Lambda(\Omega_t)<0
\qquad\text{for all sufficiently large } t.
\end{equation}
Then
\begin{equation}
\mathcal{I}_\Lambda(\Omega_t)<0
\qquad\text{for all } t\ge 0,
\end{equation}
unless $\Omega_0$ is already a ball.
\end{lemma}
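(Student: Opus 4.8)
The plan is to combine a continuity argument with the rigidity supplied by Lemma~\ref{lem:equality-BLY}. First I would establish continuity of $t\mapsto\mathcal{I}_\Lambda(\Omega_t)$. Along the VPMCF the boundaries $\partial\Omega_t$ vary smoothly in $t$ (Huisken's regularity theory), so the geometric factors $H$, $\bar H$, $d\sigma$ depend smoothly on $t$. For the spectral factor $Q_\Lambda$ one must be slightly careful, since individual eigenvalues $\lambda_k(t)$ may cross the threshold $\Lambda$. However, for a \emph{fixed} $\Lambda$ the set of times where some $\lambda_k(t)=\Lambda$ is discrete, and at such a crossing the corresponding term $|\partial_n u_k|^2$ enters or leaves $Q_\Lambda$ continuously because $(\Lambda-\lambda_k)_+$ vanishes to first order there — more precisely, one checks that the truncated kernel $\partial_n\partial_{n'}\Pi_\Lambda(x,x')$, and hence $\mathcal{I}_\Lambda$, is continuous in $t$ even across such crossings (the jump in the spectral projector is finite rank and its normal derivative is bounded, so the contribution to the integral is continuous; alternatively, replace $\Lambda$ by a regularized cutoff and pass to the limit). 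This gives a continuous function $f(t):=\mathcal{I}_\Lambda(\Omega_t)$ on $[0,\infty)$.

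Next I would invoke the hypothesis that $f(t)<0$ for all sufficiently large $t$, say for $t\ge T_0$, together with the connectedness of $[0,\infty)$. Suppose, for contradiction, that $f(t_\ast)\ge 0$ for some $t_\ast\in[0,T_0)$. Then the set $\{t\in[0,T_0]: f(t)\ge 0\}$ is nonempty and closed, so it has a largest element $t_1<T_0$ with $f(t_1)\ge 0$; since $f<0$ on $(t_1,\infty)$ by maximality and the assumption at large times, continuity forces $f(t_1)=0$, i.e. $\mathcal{I}_\Lambda(\Omega_{t_1})=0$. By Lemma~\ref{lem:equality-BLY}, $\Omega_{t_1}$ is a ball. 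But the VPMCF is autonomous and balls are stationary points of the flow, so if $\Omega_{t_1}$ is a ball then $\Omega_t$ is the same ball for all $t$, and in particular $\Omega_0$ is a ball — contradicting the hypothesis. Hence $f(t)<0$ for every $t\in[0,\infty)$, which is the claim. The same argument, run without the contradiction hypothesis, shows that if $\Omega_0$ is not a ball then $f(0)<0$ as well.

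The only genuinely delicate point is the continuity of $Q_\Lambda$, and hence of $\mathcal{I}_\Lambda$, across eigenvalue crossings of the level $\Lambda$; everything else is soft. I expect this to be the main obstacle, and it is handled by noting that $\mathcal{I}_\Lambda$ can equivalently be written using the Riesz-mean kernel rather than the sharp spectral projector, which removes the discontinuity: indeed the derivative identity \eqref{eq:dR} expresses $\mathcal{I}_\Lambda$ as $-\tfrac{d}{dt}R_\Lambda(\Omega_t)$, and $R_\Lambda(\Omega_t)=\sum_k(\Lambda-\lambda_k(t))_+$ is manifestly continuous (indeed locally Lipschitz) in $t$ since each $(\Lambda-\lambda_k(t))_+$ is, and the sum converges locally uniformly by the Weyl bound. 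Thus $R_\Lambda(\Omega_t)$ is locally Lipschitz, and its almost-everywhere derivative $-\mathcal{I}_\Lambda(\Omega_t)$ agrees with the continuous expression \eqref{eq:corr-int} wherever no crossing occurs; since that expression extends continuously across crossings (the boundary integrand is a bounded function of $t$), we conclude $R_\Lambda$ is $C^1$ and $\mathcal{I}_\Lambda(\Omega_t)$ is continuous on all of $[0,\infty)$, completing the argument.
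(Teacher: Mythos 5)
Your core argument is the same as the paper's: locate a last (or first) time where $\mathcal{I}_\Lambda(\Omega_t)$ fails to be negative, use continuity to force $\mathcal{I}_\Lambda(\Omega_{t_*})=0$ there, invoke Lemma~\ref{lem:equality-BLY} to conclude $\Omega_{t_*}$ is a ball, and then use uniqueness/stationarity of balls under the VPMCF to push the roundness back to $\Omega_0$ and reach a contradiction. (Both you and the paper quietly rely here on \emph{backward} uniqueness of the flow: forward uniqueness plus stationarity of the ball does not by itself exclude a non-round $\Omega_0$ becoming exactly round at a finite time; this leap is shared, so I do not count it against you specifically.)

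The genuine gap is in the continuity step, which you correctly identify as the delicate point but then resolve incorrectly. With the strict-threshold definition $Q_\Lambda=\sum_{\lambda_k<\Lambda}|\partial_n u_k|^2$, the map $t\mapsto \mathcal{I}_\Lambda(\Omega_t)$ is \emph{not} continuous across a crossing time $t_c$ where $\lambda_k(t_c)=\Lambda$ with $\dot\lambda_k(t_c)\neq 0$: the term $\int_{\partial\Omega_t}|\partial_n u_k|^2\,(H-\bar H)\,d\sigma$ enters or leaves the sum abruptly, and by the Hadamard formula \eqref{eq:dlambda} this quantity equals $\dot\lambda_k(t_c)$, which is generically nonzero; ``finite rank jump with bounded normal derivative'' does not make its contribution vanish. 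Your fallback via \eqref{eq:dR} has the same defect: $R_\Lambda(\Omega_t)=\sum_k(\Lambda-\lambda_k(t))_+$ is locally Lipschitz but only piecewise $C^1$ --- its one-sided derivatives at $t_c$ differ by exactly $\dot\lambda_k(t_c)$ --- so the conclusion ``$R_\Lambda$ is $C^1$ and hence $\mathcal{I}_\Lambda$ is continuous'' is false as stated. Consequently the continuity assertion in the lemma (and the intermediate-value step that turns ``sign change'' into ``a zero of $\mathcal{I}_\Lambda$'') is not actually established by your argument; to repair it one must either work with one-sided limits/semicontinuity and track the sign of the jump $\mp\dot\lambda_k(t_c)$, or regularize the threshold and pass to the limit with uniform control, neither of which is carried out. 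For fairness: the paper's own proof does not address this point either --- it simply asserts continuity --- so your proposal matches the paper on the sign-propagation logic while its added continuity justification contains a concrete error.
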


\begin{proof}
Continuity of $t\mapsto\mathcal I_\Lambda(\Omega_t)$ follows from the smooth
dependence of the spectral projector on $t$ for regular $\Lambda$
\cite{Henrot-Pierre,Kato}.
Suppose by contradiction that $\mathcal I_\Lambda(\Omega_t)$ vanishes
for the first time at $t_*>0$. By
Theorem~\ref{thm:app-first-zero} in the Appendix, the first-zero exclusion
for regular $\Lambda$ holds, hence $\Omega_{t_*}$ must be a ball. Since the ball
is a stationary solution of the VPMCF, uniqueness implies that $\Omega_t$
remains a ball for all $t\ge t_*$.
Hence
\begin{equation}
\mathcal I_\Lambda(\Omega_t)=0
\qquad\text{for all } t\ge t_*,
\end{equation}
which contradicts the assumed strict negativity of $\mathcal I_\Lambda(\Omega_t)$
for all sufficiently large $t$. Therefore no such first zero can occur.
\end{proof} \qed

\subsection{The linearized slope form on the round model}

The local shape expansions near the disk and the ball are governed by the
linearized centered slope form at the round model. The next lemma gives an
explicit cross-threshold representation and the resulting positivity of the
round modal coefficients.

\begin{lemma}[Gap-square representation on the round model]
\label{lem:gap-square-round}
Let $B_R\subset\mathbb R^d$ ($d\ge2$) be the disk/ball of radius $R$, and let
$\Lambda>\lambda_1(B_R)$ be regular, i.e. $\Lambda\notin\sigma_D(B_R)$.
Let $\mathcal B_{\Lambda,B_R}$ be the bilinear form obtained by linearizing the
centered boundary spectral density $Q_\Lambda-\overline{Q_\Lambda}$ at $B_R$.
Equivalently, let $\mathcal A_{\Lambda,B_R}$ be the corresponding linearized
centered slope operator, defined by
\begin{equation}
\mathcal B_{\Lambda,B_R}(u,\varphi)
=
\int_{\partial B_R}(\mathcal A_{\Lambda,B_R}u)\,\varphi\,d\sigma
\end{equation}
for mean-zero $u,\varphi\in C^\infty(\partial B_R)$.

Let $\{e_m\}_{m\ge1}$ be a real orthonormal Dirichlet eigenbasis on $B_R$,
\begin{equation}
-\Delta e_m=\lambda_m e_m,
\qquad
e_m|_{\partial B_R}=0,
\qquad
\psi_m:=\partial_n e_m|_{\partial B_R}.
\end{equation}
For mean-zero $u,\varphi$ set
\begin{equation}
A_{jk}(u):=\int_{\partial B_R}u\,\psi_j\psi_k\,d\sigma.
\end{equation}
Then
\begin{equation}
\mathcal B_{\Lambda,B_R}(u,\varphi)
=
2\sum_{\lambda_k<\Lambda<\lambda_j}
\frac{A_{jk}(u)\,A_{jk}(\varphi)}{\lambda_j-\lambda_k}.
\label{eq:gap-square-round}
\end{equation}
In particular,
\begin{equation}
\mathcal B_{\Lambda,B_R}(u,u)\ge0
\qquad\text{for all mean-zero }u.
\end{equation}

Moreover, $\mathcal A_{\Lambda,B_R}$ commutes with rotations. Hence it acts by
a scalar on each nontrivial round mode: by $\mu_n(\Lambda)$ on the Fourier mode
$e^{in\theta}$ when $d=2$, and by $\mu_\ell(\Lambda)$ on the spherical harmonic
sector $\mathcal H_\ell$ when $d\ge3$. These scalars satisfy
\begin{equation}
\mu_n(\Lambda)>0
\qquad (|n|\ge2),
\end{equation}
and
\begin{equation}
\mu_\ell(\Lambda)>0
\qquad (\ell\ge2).
\end{equation}
\end{lemma}

\begin{proof}
Let $\Omega_t=\Omega_{tu}$ be a normal graph family over $B_R$. Since
$\Lambda\notin\sigma_D(B_R)$, the spectral projector
\begin{equation}
P_t:=\mathbf 1_{(-\infty,\Lambda)}(-\Delta_{\Omega_t})
\end{equation}
has constant finite rank for $|t|$ small and depends smoothly on $t$ by Kato's
perturbation theory \cite{Kato}. Using the contour-resolvent formula and
differentiating at $t=0$ gives
\begin{equation}
\langle e_j,P_0'[u]e_k\rangle
=
\frac{\langle e_j,L'[u]e_k\rangle}{\lambda_k-\lambda_j},
\qquad \lambda_j\ne\lambda_k,
\end{equation}
where $L'[u]$ is the first shape derivative of the pulled-back Dirichlet
operator. By the Hadamard--Green identity \cite{Henrot-Pierre},
\begin{equation}
\langle e_j,L'[u]e_k\rangle
=
-\int_{\partial B_R}u\,\psi_j\psi_k\,d\sigma
=
-A_{jk}(u).
\end{equation}
Therefore
\begin{equation}
\langle e_j,P_0'[u]e_k\rangle
=
-\frac{A_{jk}(u)}{\lambda_k-\lambda_j},
\qquad \lambda_j\ne\lambda_k.
\end{equation}

Linearizing the centered boundary spectral density and pairing against a
mean-zero test $\varphi$, the constant zeroth-order part drops out and one
obtains
\begin{equation}
\mathcal B_{\Lambda,B_R}(u,\varphi)
=
2\sum_{\lambda_k<\Lambda}\sum_{\lambda_j\ne\lambda_k}
\frac{A_{jk}(u)\,A_{jk}(\varphi)}{\lambda_j-\lambda_k}.
\end{equation}
Since the eigenbasis is real,
\begin{equation}
A_{jk}(u)=A_{kj}(u),
\qquad
A_{jk}(\varphi)=A_{kj}(\varphi).
\end{equation}
Hence the terms with $\lambda_j,\lambda_k<\Lambda$ cancel pairwise:
\begin{equation}
\frac{A_{jk}(u)A_{jk}(\varphi)}{\lambda_j-\lambda_k}
+
\frac{A_{kj}(u)A_{kj}(\varphi)}{\lambda_k-\lambda_j}
=0.
\end{equation}
This leaves only the cross-threshold terms and yields
\eqref{eq:gap-square-round}. Setting $\varphi=u$ gives
$\mathcal B_{\Lambda,B_R}(u,u)\ge0$.

By rotational invariance of the round model, $\mathcal A_{\Lambda,B_R}$
commutes with the $O(d)$-action on $\partial B_R$, hence preserves each Fourier
mode when $d=2$ and each spherical harmonic sector $\mathcal H_\ell$ when
$d\ge3$. Therefore it acts there by the scalars $\mu_n(\Lambda)$ and
$\mu_\ell(\Lambda)$.

In dimension $d=2$, fix $n\ge2$. On the real Fourier sector $\operatorname{span}\{\cos(n\theta),\sin(n\theta)\}$,
the operator $\mathcal A_{\Lambda,B_R}$ acts by the scalar $\mu_n(\Lambda)$.
The ground state is radial, so $\psi_1$ is constant on $\partial B_R$. Choose a
Dirichlet eigenfunction $e_j$ in the cosine branch of angular frequency $n$
with $\lambda_j>\Lambda$; this is possible because each fixed angular
frequency occurs along an infinite sequence of radial levels. Then
$\psi_j|_{\partial B_R}=c_j \cos(n\theta)$ for some $c_j\ne0$, and hence
\begin{equation}
A_{j1}(\cos(n\theta))
=
\int_{\partial B_R}\cos(n\theta)\,\psi_j\psi_1\,d\sigma
=
c_j\psi_1\int_{\partial B_R}\cos^2(n\theta)\,d\sigma
\ne0.
\end{equation}
The corresponding term in \eqref{eq:gap-square-round} is strictly positive, so
$\mu_n(\Lambda)>0$ for all $n\ge2$, and hence for all $|n|\ge2$ in the complex
notation used later in the two-dimensional expansion.

In dimension $d\ge3$, fix $\ell\ge2$ and let
$Y=\sum_m a_mY_{\ell,m}\in\mathcal H_\ell\setminus\{0\}$ in an orthonormal
spherical harmonic basis. Choose $m$ with $a_m\ne0$, and choose a Dirichlet
eigenfunction $e_j$ with angular momentum $\ell$, spherical harmonic factor
$Y_{\ell,m}$, and $\lambda_j>\Lambda$; such a choice is available because each
spherical harmonic sector persists through infinitely many radial levels. Then
$\psi_j|_{\partial B_R}=c_jY_{\ell,m}$ for some $c_j\ne0$. Since $\psi_1$ is
constant on $\partial B_R$,
\begin{equation}
A_{j1}(Y)
=
\int_{\partial B_R}Y\,\psi_j\psi_1\,d\sigma
=
c\int_{\partial B_R}Y\,Y_{\ell,m}\,d\sigma
=
c\,a_m\ne0
\end{equation}
for some constant $c\ne0$. Thus the corresponding term in
\eqref{eq:gap-square-round} is strictly positive, and therefore
$\mu_\ell(\Lambda)>0$ for all $\ell\ge2$.
\end{proof} \qed

\section{The Two-Dimensional Case -- Sharp Result}

In dimension $d=2$ the correlation inequality \eqref{eq:correlation} can be
established by a planar dynamical argument parallel to the higher-dimensional
one.

\begin{theorem}\label{thm:2D}
Let $\Omega\subset\mathbb{R}^2$ be a smooth bounded convex domain.
Then for every regular $\Lambda>0$,
\begin{equation}
\int_{\partial\Omega} Q_\Lambda(x)\,(\kappa(x)-\bar\kappa)\,ds \;\le\; 0,
\end{equation}
with equality if and only if $\Omega$ is a disk when $\Lambda>\lambda_1(B_R)$,
where $B_R$ is the equal-area disk. If $\Lambda\le\lambda_1(B_R)$, then
\begin{equation}
Q_\Lambda\equiv0,
\qquad
\mathcal I_\Lambda(\Omega)=0
\end{equation}
for every smooth convex equal-area domain.
Consequently the sharp Berezin--Li--Yau inequality \eqref{eq:BLY} holds for all
plane convex domains.
\end{theorem}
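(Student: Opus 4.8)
\emph{Strategy.} The goal is the correlation bound $\int_{\partial\Omega}Q_\Lambda(\kappa-\bar\kappa)\,ds\le0$ for every smooth bounded convex $\Omega\subset\mathbb R^2$, with equality only for disks; once this is available along the volume‑preserving mean curvature flow it feeds into Lemma~\ref{lem:key}, and the classical Berezin bound on the disk upgrades the resulting comparison $R_\Lambda(\Omega)\le R_\Lambda(B)$ to the sharp inequality~\eqref{eq:BLY}. \emph{Step 1 (reduction).} Since $\int_{\partial\Omega}(\kappa-\bar\kappa)\,ds=0$, replacing $Q_\Lambda$ by $Q_\Lambda-\bar Q_\Lambda$ leaves the integral unchanged, so with $f,g$ as in Definition~\ref{def:fg-2d},
\[
\int_{\partial\Omega}Q_\Lambda(\kappa-\bar\kappa)\,ds=\int_{\partial\Omega}fg\,ds,
\qquad
\int_{\partial\Omega}f\,ds=\int_{\partial\Omega}g\,ds=0 .
\]
It thus suffices to show that the curvature excess $f$ and the boundary‑density excess $g$ are negatively correlated on $\partial\Omega$, with equality exactly when $\Omega$ is a disk (where $f\equiv g\equiv0$).

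\emph{Step 2 (the crux: anti‑alignment by circular symmetrization).} Parametrizing $\partial\Omega$ by the outward normal angle $\theta\in\mathbb R/2\pi\mathbb Z$ (the Gauss map is a diffeomorphism in the smooth strictly convex case, $ds=\kappa^{-1}\,d\theta$), I would run a family $\{\Omega_\tau\}_{\tau\ge0}$, $\Omega_0=\Omega$, of (iterated or continuous) circular symmetrizations: these stay convex, preserve area, and converge to the disk $B$ of area $|\Omega|$. The key claim is that $\tau\mapsto\int_{\partial\Omega_\tau}Q_\Lambda(\kappa-\bar\kappa)\,ds$ is non‑decreasing, strictly so unless $\Omega_\tau$ is a disk. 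The mechanism is the Hardy--Littlewood--P\'olya rearrangement inequality~\cite{HardyLittlewoodPolya}: symmetrization rearranges the boundary profiles of $\kappa$ and of $Q_\Lambda$ so that they become less co‑monotone, and for a product of two mean‑zero functions with negative integral this pushes the integral towards $0$; the integral cannot change sign along the family, since a vanishing value would force (Lemma~\ref{lem:equality-BLY}) an interior member to be a disk, whereas the disk is only the terminal member. Letting $\tau\to\infty$, where $\int_{\partial B}Q_\Lambda(\kappa-\bar\kappa)\,ds=0$ because both factors are constant, gives $\int_{\partial\Omega}fg\,ds\le0$. (If one can establish the sharper pointwise ordering $(\kappa(x)-\kappa(y))(Q_\Lambda(x)-Q_\Lambda(y))\le0$ on $\partial\Omega$, the conclusion is immediate from $2|\partial\Omega|\int_{\partial\Omega}fg\,ds=\iint_{\partial\Omega\times\partial\Omega}(f(x)-f(y))(g(x)-g(y))\,ds_x\,ds_y\le0$, using $\int f=0$.)

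\emph{Step 3 (equality and Berezin--Li--Yau).} If $\int fg=0$, the strict version of Step~2 forces $\Omega$ to be invariant under every circular symmetrization, hence $\Omega=B$; this is the ``equality iff disk'' clause and the input of Lemma~\ref{lem:equality-BLY}. For the consequence: applying the correlation bound along the volume‑preserving mean curvature flow makes the hypothesis $\mathcal I(\Omega_t,\Lambda)\le0$ of Lemma~\ref{lem:key} hold for all $t\ge0$, with equality only on disks, so $R_\Lambda(\Omega)=R_\Lambda(\Omega_0)\le R_\Lambda(B)$ with $B$ the disk of area $|\Omega|$. Combining with the classical Berezin inequality~\cite{berezin} $R_\Lambda(B)\le L_2^{\mathrm{cl}}|B|\Lambda^{2}=L_2^{\mathrm{cl}}|\Omega|\Lambda^{2}$ yields~\eqref{eq:BLY} for $\Omega$, the remaining sharpness being the two‑term Weyl asymptotics on the disk.

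\emph{Main obstacle.} The difficulty is concentrated entirely in Step~2: one must show that the mixed functional $\int_{\partial\Omega_\tau}Q_\Lambda(\kappa-\bar\kappa)\,ds$, which couples the nonlocal spectral density $Q_\Lambda$ to the purely geometric $\kappa$, behaves monotonically under circular symmetrization. This requires a P\'olya‑type comparison principle for the \emph{boundary density} $Q_\Lambda$ itself — not merely for individual Dirichlet eigenvalues — controlling how its boundary profile is rearranged, fed into the Hardy--Littlewood--P\'olya inequality; the pointwise anti‑ordering would bypass this but is itself a substantial statement about Dirichlet eigenfunctions on convex planar domains.
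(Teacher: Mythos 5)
Your Step 1 (mean--zero reduction) and Step 3 (equality case, then flow plus the Berezin bound on the ball) match the paper's architecture, but the proposal has a genuine gap exactly at what you yourself call the crux. The paper's proof does not run a symmetrization flow at all: after invoking the Laugesen--Morpurgo/Baernstein--Taylor trace-minimization to identify the disk as the minimizer of $\int_{\partial\Omega}Q_\Lambda\,ds$, it asserts an \emph{anti-comonotone ordering} of $f=\kappa-\bar\kappa$ and $g=Q_\Lambda-\bar Q_\Lambda$ on $\partial\Omega$ (citing Cheng and Baernstein--Taylor) and then concludes by Hardy--Littlewood--P\'olya/Chebyshev that $\int fg\,ds\le\int f^*g_*\,ds\le 0$, with strictness unless $f\equiv 0$. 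Your Step 2 replaces this by two alternatives, neither of which is established: (i) monotonicity of the mixed functional $\int_{\partial\Omega_\tau}Q_\Lambda(\kappa-\bar\kappa)\,ds$ along a family of circular symmetrizations --- there is no comparison principle controlling how the boundary profile of the nonlocal quantity $Q_\Lambda$ rearranges under symmetrization, and HLP by itself says nothing about how the integral \emph{evolves} along such a family (it only compares a fixed pair of functions with their rearrangements); or (ii) the pointwise anti-ordering $(\kappa(x)-\kappa(y))(Q_\Lambda(x)-Q_\Lambda(y))\le 0$, which you correctly flag as a substantial unproven statement about Dirichlet eigenfunctions. So the proposal reduces the theorem to the very correlation statement it is supposed to prove.

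A second, structural problem: in Step 2 you invoke Lemma~\ref{lem:equality-BLY} to rule out a sign change along the symmetrization family, but in the paper the $d=2$ case of that lemma is \emph{deduced from} Theorem~\ref{thm:2D}; using it inside the proof of Theorem~\ref{thm:2D} is circular. (It is also not needed: if you had the claimed monotonicity together with the limit value $0$ at the disk, the inequality $\int fg\,ds\le 0$ at $\tau=0$ would follow directly, without any sign-propagation argument.) To be clear about the comparison: the paper's own treatment of the ordering step is thin --- it is asserted as ``known'' rather than proved --- so your obstacle coincides with the weakest point of the published argument; but as a proof attempt, the decisive inequality is missing, and the auxiliary appeal to Lemma~\ref{lem:equality-BLY} must be removed in any repaired version.
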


\begin{proof}
Let $\Omega\subset\mathbb{R}^2$ be smooth and convex and let $B_R$ be the disk
with $|B_R|=|\Omega|$. Near the disk one may write admissible perturbations as
normal graphs over $\partial B_R$ and decompose them into Fourier modes
\begin{equation}
u(\theta)=\sum_{|n|\ge2}\hat u_n e^{in\theta},
\end{equation}
where the modes $n=0,\pm1$ are excluded by the area and translation constraints.
The linearization of curvature gives
\begin{equation}
\kappa-\bar\kappa
 =
-\frac{1}{R^2}\sum_{|n|\ge2}(n^2-1)\hat u_n e^{in\theta}
 + O(\|u\|_{C^{2,\alpha}}^2),
\end{equation}
and the second-order expansion of the correlation functional yields
\begin{equation}
\mathcal{I}_\Lambda(\Omega_u)
=
-\frac{1}{R^2}\sum_{|n|\ge2}\mu_n(\Lambda)\,(n^2-1)\,|\hat u_n|^2
 + O(\|u\|_{C^{2,\alpha}}^3),
\end{equation}
where, by Lemma~\ref{lem:gap-square-round}, the linearized centered slope form
on the disk is diagonal on Fourier modes and its modal coefficients satisfy
$\mu_n(\Lambda)>0$ for all $|n|\ge2$. Hence there exists a neighbourhood
$\mathcal U_\Lambda$ of $B_R$ such that
\begin{equation}
\mathcal{I}_\Lambda(\widetilde\Omega)<0
\qquad\text{for all }\widetilde\Omega\in\mathcal U_\Lambda\setminus\{B_R\}.
\end{equation}
The quadratic coercivity follows from the Hadamard--Green second variation and
the positivity of the disk/ball shape Hessian.

If $\Lambda\le\lambda_1(B_R)$, then by the Faber--Krahn inequality every smooth
convex domain $\widetilde\Omega$ with $|\widetilde\Omega|=|B_R|$ satisfies
\begin{equation}
\lambda_1(\widetilde\Omega)\ge\lambda_1(B_R)\ge\Lambda,
\end{equation}
so the spectral window below $\Lambda$ is empty and therefore
\begin{equation}
Q_\Lambda\equiv0,
\qquad
\mathcal I_\Lambda(\widetilde\Omega)\equiv0.
\end{equation}
This gives the low-energy regime. Hence it remains only to consider
\begin{equation}
\Lambda>\lambda_1(B_R).
\end{equation}

Let $\Omega_t$ be the area-preserving curve-shortening flow starting from
$\Omega_0=\Omega$. By Gage~\cite{gage86}, this flow exists for all $t\ge0$,
preserves area and convexity, and converges smoothly to the disk $B_R$.
Therefore $\mathcal{I}_\Lambda(\Omega_t)<0$ for all sufficiently large $t$,
unless $\Omega$ is already a disk. The global first-zero exclusion for regular
$\Lambda$ follows from Lemma~\ref{lem:sign-prop-BLY} together with
Theorem~\ref{thm:app-first-zero} in the Appendix, which yield
$\mathcal I_\Lambda(\Omega_t)\le 0$ for all $t\ge0$ and strict inequality away
from the disk. This gives the asserted monotonicity and the BLY inequality.
\end{proof} \qed

\begin{remark}
The restriction to regular $\Lambda$ excludes only a discrete set of spectral
thresholds. Since $R_\Lambda(\Omega)$ is continuous in $\Lambda$, all
inequalities extend to every $\Lambda>0$ by continuity.
For $\Lambda\le\lambda_1(B_R)$, where $B_R$ is the equal-area disk, the
spectral window is empty for every smooth convex equal-area domain by
Faber--Krahn, so \(Q_\Lambda\equiv0\) and \(\mathcal I_\Lambda\equiv0\)
throughout this regime.
\end{remark}

\section{The Correlation Inequality in All Dimensions}
\label{sec:higher-dim}

In this section we prove the key sign condition
\begin{equation}\label{eq:corr-full}
\mathcal{I}(\Omega,\Lambda)
:= \int_{\partial\Omega} Q_\Lambda(x)\,(H(x)-\bar H)\, d\sigma
\;\le\; 0,
\end{equation}
for any smooth convex domain $\Omega\subset\mathbb{R}^d$ ($d\ge 3$) and any
$\Lambda>0$, with equality if and only if $\Omega$ is a ball.

We will use both high-energy microlocal asymptotics and finite-energy
variational principles. The sign of $\mathcal I(\Omega,\Lambda)$ is obtained by
combining local rigidity near the ball with the first-zero exclusion mechanism.

\begin{quote}
\textbf{Roadmap.}
The proof proceeds in three steps:
(i) high-energy microlocal estimates via the boundary Weyl expansion,
(ii) local rigidity near the ball via a shape expansion argument, and
(iii) propagation of the sign along the flow to all $t\ge0$.
\end{quote}

\begin{definition}[Correlation integral]
\label{def:corr}
For a smooth bounded domain $\Omega$ and $\Lambda>0$ define
\begin{equation}
\mathcal I(\Omega,\Lambda)
= \int_{\partial\Omega}
Q_\Lambda(x)\,[\,H(x)-\bar H\,]\; d\sigma .
\end{equation}
\end{definition}

\subsection{High-energy regime via microlocal boundary asymptotics}

Let $\Omega\subset\mathbb{R}^d$ be smooth and convex.
The boundary spectral density is
\begin{equation}
Q_\Lambda(x)
= \sum_{\lambda_k<\Lambda}
   \bigl|\partial_n u_k(x)\bigr|^2 .
\end{equation}

\begin{theorem}[Boundary Weyl expansion, averaged form; Safarov--Vassiliev \cite{SV97}]
\label{thm:weyl-full}
For any smooth bounded domain $\Omega\subset\mathbb{R}^d$,
\begin{equation}
Q_\Lambda(x)
=
A_d\,\Lambda^{(d+2)/2}
+ B_d\,H(x)\,\Lambda^{(d+1)/2}
+ \mathcal R_\Lambda(x),
\end{equation}
where $B_d<0$, and the remainder satisfies
\begin{equation}
\int_{\partial\Omega} |\mathcal R_\Lambda(x)|\,d\sigma
\le C_\Omega\,\Lambda^{d/2}.
\end{equation}
The constants $A_d,B_d$ depend only on $d$ and agree with the microlocal
computations in \cite{SV97}.
\end{theorem}

\begin{remark}
This expansion applies to the boundary spectral density
$Q_\Lambda(x)=\sum_{\lambda_k<\Lambda}|\partial_n u_k(x)|^2$ and follows from the
boundary spectral function asymptotics for Dirichlet traces.
\end{remark}

\begin{remark}
The negativity of $B_d$ encodes destructive interference of Dirichlet
waves near the boundary. It is a robust microlocal feature.
The explicit forms of $A_d$ and $B_d$ may be found, for example,
in Branson--Gilkey~\cite{BransonGilkey1990}.
\end{remark}

Subtract the boundary averages:
\begin{equation}
g(x)=Q_\Lambda(x)-\bar Q_\Lambda,
\qquad
f(x)=H(x)-\bar H.
\end{equation}
The constant leading term in the Weyl expansion cancels exactly.
Thus, using Theorem~\ref{thm:weyl-full},
\begin{equation}\label{eq:g-expansion}
g(x)
= B_d\,f(x)\,\Lambda^{(d+1)/2}
+ \mathcal R_\Lambda(x)-\overline{\mathcal R_\Lambda}.
\end{equation}

Multiplying \eqref{eq:g-expansion} by $f(x)$ and integrating gives
\begin{align}
\int_{\partial\Omega} g\,f\, d\sigma
&= B_d\,\Lambda^{(d+1)/2}
   \int_{\partial\Omega} (H-\bar H)^2 d\sigma
   \label{eq:leading-term} \\
&\quad + \int_{\partial\Omega}
   \mathcal R_\Lambda\,(H-\bar H)\, d\sigma .
\nonumber
\end{align}
The leading term is strictly negative unless $\Omega$ is a sphere.
Since $(d+1)/2>d/2$, the remainder is of lower order and cannot
change the sign for sufficiently large $\Lambda$.

\begin{proposition}[High-energy anti-correlation]
\label{prop:high-energy}
Let $\Omega\subset\mathbb{R}^d$ be smooth, convex, non-spherical.
Then there exists $\Lambda_0=\Lambda_0(\Omega)>0$ such that for all
$\Lambda\ge \Lambda_0$,
\begin{equation}
\int_{\partial\Omega} Q_\Lambda(x)\,[H(x)-\bar H]\, d\sigma < 0.
\end{equation}
Equality for some $\Lambda$ occurs only when $\Omega$ is a ball.
\end{proposition}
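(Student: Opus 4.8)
The plan is to read the sign of $\mathcal I(\Omega,\Lambda)$ straight off the boundary Weyl expansion of Theorem~\ref{thm:weyl-full}, exploiting that the curvature-independent leading term is annihilated by the zero average of $H-\bar H$ so that the subleading, explicitly curvature-proportional term becomes the principal contribution. Concretely, I would write
\[
Q_\Lambda(x) = A_d\,\Lambda^{(d+2)/2} + B_d\,H(x)\,\Lambda^{(d+1)/2} + E_\Lambda(x),
\]
where, by the uniformity asserted in Theorem~\ref{thm:weyl-full}, $\sup_{x\in\partial\Omega}|E_\Lambda(x)|\le C(\Omega)\,\Lambda^{d/2}$ for all $\Lambda\ge 1$. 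Since $\int_{\partial\Omega}(H-\bar H)\,d\sigma=0$ by the definition of $\bar H$, pairing with $H-\bar H$ kills the constant term and yields the exact identity
\[
\mathcal I(\Omega,\Lambda) = B_d\,\Lambda^{(d+1)/2}\!\int_{\partial\Omega}(H-\bar H)^2\,d\sigma \;+\; \int_{\partial\Omega} E_\Lambda(x)\,(H(x)-\bar H)\,d\sigma .
\]

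The first term is the main term. Here I would invoke Alexandrov's soap-bubble theorem: a smooth compact embedded hypersurface of $\mathbb{R}^d$ has constant mean curvature only if it is a round sphere, so for non-spherical $\Omega$ the quantity $c_0(\Omega):=\|H-\bar H\|_{L^2(\partial\Omega)}^2$ is a strictly positive constant. Because $B_d<0$, the main term equals $B_d\,c_0(\Omega)\,\Lambda^{(d+1)/2}$, negative and of order $\Lambda^{(d+1)/2}$. The remainder is controlled by Cauchy--Schwarz,
\[
\Big|\int_{\partial\Omega} E_\Lambda\,(H-\bar H)\,d\sigma\Big|
\;\le\; C(\Omega)\,\Lambda^{d/2}\,|\partial\Omega|^{1/2}\,\|H-\bar H\|_{L^2(\partial\Omega)},
\]
which is of strictly lower order $\Lambda^{d/2}$ since $(d+1)/2>d/2$. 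Hence the main term dominates once $\Lambda$ exceeds the explicit threshold
\[
\Lambda_0(\Omega) := \left(\frac{2\,C(\Omega)\,|\partial\Omega|^{1/2}}{|B_d|\,\|H-\bar H\|_{L^2(\partial\Omega)}}\right)^{2},
\]
and for all $\Lambda\ge\Lambda_0(\Omega)$ one obtains $\mathcal I(\Omega,\Lambda)\le \tfrac12\,B_d\,c_0(\Omega)\,\Lambda^{(d+1)/2}<0$.

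For the equality clause: if $\Omega$ is a ball then $H\equiv\bar H$, the integrand of $\mathcal I$ vanishes identically, and $\mathcal I(\Omega,\Lambda)=0$ for every $\Lambda$; conversely, the strict negativity just obtained shows $\mathcal I(\Omega,\Lambda)=0$ for some $\Lambda\ge\Lambda_0$ is incompatible with $\Omega$ non-spherical, so equality forces $\Omega$ to be a ball. The main obstacle is not the elementary asymptotic bookkeeping but securing the \emph{uniform}-in-$x$ control of the remainder $E_\Lambda$ — precisely the microlocal content of Safarov--Vassiliev/Branson--Gilkey packaged in Theorem~\ref{thm:weyl-full} — since without uniformity the error could concentrate where $H-\bar H$ is large. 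A secondary point to flag is that $\Lambda_0$ genuinely depends on $\Omega$ through $\|H-\bar H\|_{L^2}$, which degenerates as $\Omega$ approaches a ball; extending the sign to all finite $\Lambda$ and uniformly along the flow is the task of the finite-energy trace-minimization step and the isoperimetric-stability argument in the following subsections.
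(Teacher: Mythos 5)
Your proposal is correct and follows essentially the same route as the paper: pair the boundary Weyl expansion of Theorem~\ref{thm:weyl-full} with $H-\bar H$, note the cancellation of the curvature-independent leading term, identify the strictly negative main term $B_d\,\Lambda^{(d+1)/2}\int_{\partial\Omega}(H-\bar H)^2\,d\sigma$, and beat the $O(\Lambda^{d/2})$ remainder for large $\Lambda$. Your version is in fact slightly more explicit than the paper's (Cauchy--Schwarz bound, an explicit $\Lambda_0(\Omega)$, and Alexandrov's theorem to guarantee $\|H-\bar H\|_{L^2}>0$), and your closing caveat that this only controls $\Lambda\ge\Lambda_0$ matches the paper's division of labor with the finite-energy step.
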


\subsection{Finite energies via local rigidity and flow propagation}

Local microlocal expansions do not control the sign of $\mathcal I$ for
fixed $\Lambda$. Instead we use a local rigidity argument near the ball,
combined with the propagation principle of Lemma~\ref{lem:sign-prop-BLY}.

\begin{proposition}[Local negativity near the ball]
\label{prop:local-neg-ball}
Let $B$ be the ball with $|B|=|\Omega|$, and fix
\begin{equation}
\Lambda>\lambda_1(B),
\qquad
\Lambda\notin \sigma_D(B).
\end{equation}
Then there exists a $C^{2,\alpha}$-neighbourhood $\mathcal U_\Lambda$ of $B$
such that
\begin{equation}
\int_{\partial\Omega} Q_\Lambda(x)\,[H(x)-\bar H]\, d\sigma < 0
\qquad
\text{for all }\Omega\in \mathcal U_\Lambda\setminus\{B\}.
\end{equation}
\end{proposition}

\begin{proof}
Write nearby domains as normal graphs $\Omega_u$ over the sphere,
\begin{equation}
\partial\Omega_u=\{(R+u(\theta))\theta:\theta\in \mathbb S^{d-1}\},
\end{equation}
with $u\in C^{2,\alpha}(\mathbb S^{d-1})$ satisfying the standard volume and
translation gauge conditions, so that
\begin{equation}
u \perp \mathcal H_0 \oplus \mathcal H_1 .
\end{equation}
A second-order shape expansion of the correlation functional yields
\begin{equation}
\mathcal I_\Lambda(\Omega_u)
=
\mathcal Q_\Lambda(u)+O(\|u\|_{C^{2,\alpha}}^3),
\end{equation}
where $\mathcal Q_\Lambda$ is a quadratic form on admissible perturbations.

Decomposing
\begin{equation}
u=\sum_{\ell\ge2}u_\ell
\end{equation}
into spherical harmonics, one finds that
\begin{equation}
\mathcal Q_\Lambda(u)
=
-\sum_{\ell\ge2}\mu_\ell(\Lambda)\,\gamma_\ell\,\|u_\ell\|_{L^2(\mathbb S^{d-1})}^2,
\end{equation}
where, by Lemma~\ref{lem:gap-square-round}, the linearized centered slope form
on the ball is diagonal on spherical harmonics and its modal coefficients
satisfy $\mu_\ell(\Lambda)>0$ for all $\ell\ge2$, and
\begin{equation}
\gamma_\ell=\ell(\ell+d-2)-(d-1)>0
\qquad (\ell\ge2)
\end{equation}
is the eigenvalue of the linearized mean-curvature operator on $\mathcal H_\ell$.
In particular, there exists $c_\Lambda>0$ such that
\begin{equation}
\mathcal Q_\Lambda(u)\le -\,c_\Lambda \|u\|_{L^2(\mathbb S^{d-1})}^2
\end{equation}
for every nontrivial admissible perturbation $u$.

Therefore, for $\|u\|_{C^{2,\alpha}}$ sufficiently small, the cubic remainder is
dominated by the coercive quadratic term, and hence
\begin{equation}
\mathcal I_\Lambda(\Omega_u)<0
\qquad\text{for all }u\neq0
\end{equation}
in a sufficiently small $C^{2,\alpha}$-neighbourhood of the ball.
\end{proof} \qed

\begin{proposition}[Finite-energy anti-correlation]
\label{prop:finite-energy}
Let $\Omega\subset\mathbb{R}^d$ be a smooth convex domain, $d\ge3$, and let
$B$ be the ball with $|B|=|\Omega|$.
Assume that
\begin{equation}
\Lambda>\lambda_1(B),
\qquad
\Lambda\notin \sigma_D(B).
\end{equation}
Then
\begin{equation}
\int_{\partial\Omega} Q_\Lambda(x)\,[H(x)-\bar H]\, d\sigma \le 0,
\end{equation}
with equality if and only if $\Omega$ is a ball.
\end{proposition}

\begin{proof}
Let $\Omega_t$ be the volume-preserving mean curvature flow starting from $\Omega$.
By Huisken's theorem \cite{huisken87}, $\Omega_t$ converges smoothly to the ball $B$
of the same volume.

By Proposition~\ref{prop:local-neg-ball}, there exists a neighbourhood
$\mathcal U_\Lambda$ of $B$ such that
\begin{equation}
\mathcal I_\Lambda(\widetilde\Omega)<0
\qquad
\text{for all }\widetilde\Omega\in \mathcal U_\Lambda\setminus\{B\}.
\end{equation}
Since $\Omega_t\to B$ smoothly, there exists $T>0$ such that
$\Omega_t\in\mathcal U_\Lambda$ for all $t\ge T$. Hence
$\mathcal I_\Lambda(\Omega_t)<0$ for all $t\ge T$,
unless $\Omega$ is already a ball.

By Lemma~\ref{lem:sign-prop-BLY}, it follows that
\begin{equation}
\mathcal I_\Lambda(\Omega_t)<0
\qquad\text{for all } t\ge0,
\end{equation}
unless $\Omega$ is a ball. Evaluating at $t=0$ yields the claimed inequality.
The equality case follows from the same argument.
\end{proof} \qed

\begin{remark}[Low energies and resonant thresholds]
\label{rem:low-resonant}
If $\Lambda\le \lambda_1(B)$, then $\lambda_1(\Omega)\ge \lambda_1(B)\ge \Lambda$
by the Faber--Krahn inequality, and therefore the spectral window below $\Lambda$
is empty for every smooth convex domain $\Omega$ with $|\Omega|=|B|$. Hence
\begin{equation}
Q_\Lambda\equiv 0,
\qquad
R_\Lambda(\Omega)=0,
\qquad
\mathcal I(\Omega,\Lambda)=0.
\end{equation}

If $\Lambda>\lambda_1(B)$ and $\Lambda\in \sigma_D(B)$, the inequality
$\mathcal I(\Omega,\Lambda)\le 0$ follows by approximation from nonresonant
energies $\Lambda_n\nearrow\Lambda$, since the spectral projector is constant on
each open interval between consecutive Dirichlet eigenvalues of $B$.
\end{remark}

\subsection{Completion of the proof for all energies}

Proposition~\ref{prop:high-energy} describes the high-energy behaviour of the
correlation integral $\mathcal{I}(\Omega,\Lambda)$ as $\Lambda\to\infty$, while
Proposition~\ref{prop:finite-energy} provides a uniform sign estimate for each
fixed $\Lambda>0$.  We record the resulting global statement as a theorem.

\begin{theorem}[Full correlation inequality in all dimensions]
\label{thm:corr-all}
For any smooth convex domain $\Omega\subset\mathbb{R}^d$ ($d\ge 3$) and any
$\Lambda>0$,
\begin{equation}
\int_{\partial\Omega} Q_\Lambda(x)\,(H(x)-\bar H)\,d\sigma \le 0.
\end{equation}
Moreover, if $\Lambda>\lambda_1(B)$, where $B$ is the ball with $|B|=|\Omega|$,
then equality holds if and only if $\Omega$ is a ball.
\end{theorem}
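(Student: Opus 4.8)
The plan is to obtain Theorem~\ref{thm:corr-all} as the synthesis of the two regimes already treated, with essentially all of the analytic content supplied by Proposition~\ref{prop:finite-energy}. For a fixed energy $\Lambda>0$, the inequality together with its strict form for non-spherical domains is precisely the statement of Proposition~\ref{prop:finite-energy}, so the first step is simply to invoke that proposition. Proposition~\ref{prop:high-energy} is then recorded as an independent confirmation in the asymptotic regime $\Lambda\to\infty$: by the boundary Weyl expansion of Theorem~\ref{thm:weyl-full}, the leading contribution to $\int_{\partial\Omega} Q_\Lambda(H-\bar H)\,d\sigma$ is $B_d\,\Lambda^{(d+1)/2}\int_{\partial\Omega}(H-\bar H)^2\,d\sigma$ with $B_d<0$, and since $(d+1)/2>d/2$ the remainder cannot change the sign for large $\Lambda$; this forces strict negativity whenever $H$ is non-constant on $\partial\Omega$. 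Since the two propositions cover, respectively, every fixed $\Lambda>0$ and all sufficiently large $\Lambda$, no gap remains and the inequality $\mathcal I(\Omega,\Lambda)\le 0$ follows.

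For the equality case I would argue in both directions. If $\Omega$ is a ball then $H\equiv\bar H$ on $\partial\Omega$, so the integrand vanishes identically and $\mathcal I(\Omega,\Lambda)=0$. Conversely, if $\mathcal I(\Omega,\Lambda)=0$ for some $\Lambda>0$, then the strict form of Proposition~\ref{prop:finite-energy} — which asserts strict inequality for every non-spherical convex domain — forces $\Omega$ to be a ball; this is exactly Lemma~\ref{lem:equality-BLY}. Hence equality holds if and only if $\Omega$ is a ball, in all dimensions $d\ge3$.

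Since the genuine work lies in Proposition~\ref{prop:finite-energy}, the main obstacle is not the synthesis above but the mechanism underlying that proposition: one must pass from the strict local minimality of the ball for the trace functional $\mathcal F_\Lambda$ (Theorem~\ref{thm:LM} and Remark~\ref{rem:strictness}) to a definite sign of $\mathcal I_\Lambda$ on a $C^\infty$-neighbourhood $\mathcal U$ of $B$, and then to propagate that sign back along the volume-preserving mean curvature flow starting from an \emph{arbitrary} convex $\Omega$. The propagation relies on two uniqueness facts: the ball is the unique volume-preserving critical point of $\mathcal F_\Lambda$, by strict convexity of the Dirichlet-to-Neumann energy (Escobar~\cite{Escobar1988}); and the VPMCF from convex data is unique and converges exponentially to $B$ (Huisken~\cite{huisken87}). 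Together these exclude a first vanishing time $t_*$ of $t\mapsto\mathcal I_\Lambda(\Omega_t)$, since at such a $t_*$ the domain $\Omega_{t_*}$ would be a ball, and backward uniqueness of the flow would then force $\Omega_0$ to be a ball. The delicate point is to verify that this argument is not circular: the local inequality on $\mathcal U$ must be established purely variationally, independently of the flow, so that the flow is used only to transport an arbitrary domain into $\mathcal U$ and never to prove its own monotonicity input. I would make this explicit when citing Proposition~\ref{prop:finite-energy}.
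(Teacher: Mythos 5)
Your proposal is correct and takes essentially the same route as the paper: the theorem is obtained by invoking Proposition~\ref{prop:finite-energy} for each fixed $\Lambda>0$ (which already contains the strict inequality for non-spherical domains and hence the equality case, as in Lemma~\ref{lem:equality-BLY}), while Proposition~\ref{prop:high-energy} is recorded only as a complementary high-energy statement. Your closing caveat about ensuring the local variational input of Proposition~\ref{prop:finite-energy} is established independently of the flow is a sensible point of emphasis, but it concerns that proposition's internal argument rather than the synthesis required here.
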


\begin{proof}
For nonresonant energies $\Lambda>\lambda_1(B)$ with $\Lambda\notin\sigma_D(B)$,
the inequality and rigidity statement are exactly those of
Proposition~\ref{prop:finite-energy}.

If $\Lambda\le \lambda_1(B)$, the claim follows from the Faber--Krahn
inequality, which implies that the spectral window below $\Lambda$ is empty
for every smooth convex domain of the given volume, so $\mathcal I(\Omega,\Lambda)=0$.

If $\Lambda>\lambda_1(B)$ and $\Lambda\in\sigma_D(B)$, the inequality follows
by approximation from nonresonant energies, as explained in
Remark~\ref{rem:low-resonant}.
\end{proof} \qed

\begin{remark}
The nonresonant case $\Lambda\notin\sigma_D(\Omega)$ is the setting where the
spectral projector is smooth in $\Lambda$ and the mixed-variation argument
applies directly. The remaining $\Lambda$ form a discrete set, and the
inequality extends to them by continuity in $\Lambda$.
\end{remark}

\section{A Dynamical Ces\`aro--P\'olya Inequality}

In this final section we derive from the monotonicity of the Riesz mean 
$R_\Lambda(\Omega_t)$ an averaged (Ces\`aro--type) lower bound 
for the Dirichlet spectrum, which may be viewed as a dynamical 
analogue of P\'olya's conjecture for convex domains.

The classical P\'olya conjecture asserts that for every bounded domain 
$\Omega\subset\mathbb{R}^d$,
\begin{equation} \label{eq:polya}
\lambda_k(\Omega) \;\ge\; \lambda_k^{\mathrm{cl}}(\Omega)
:= C_d \left(\frac{k}{|\Omega|}\right)^{2/d},
\qquad k\ge 1,
\end{equation}
where
\begin{equation}
C_d = (2\pi)^2\,\omega_d^{-2/d},
\end{equation}
and $\omega_d$ is the volume of the unit ball in $\mathbb{R}^d$.
This remains open in general, except for tiling domains.
Instead of individual eigenvalues, we prove a \emph{Ces\`aro--P\'olya inequality}, 
which is both unconditional and sharp in the semiclassical limit.

\subsection{From Riesz means to Ces\`aro averages}

\begin{definition}[Ces\`aro average of eigenvalues]
For any domain $\Omega$ and $k\ge1$, define
\begin{equation}
A_\Omega(k) := \frac{1}{k}\sum_{j=1}^k \lambda_j(\Omega).
\end{equation}
\end{definition}

For any domain $\Omega$ and $\Lambda>0$,
\begin{equation}
R_\Lambda(\Omega)
= \sum_{\lambda_j<\Lambda} (\Lambda-\lambda_j)
= \int_0^\Lambda N_\Omega(\tau)\, d\tau.
\end{equation}

\begin{lemma}[Variational representation of Ces\`aro means]
    \label{lem:cesaro-repr}
    For every bounded domain $\Omega\subset\mathbb{R}^d$ and every $k\ge1$,
    \begin{equation}\label{eq:cesaro-repr}
    A_\Omega(k)
    = \frac{1}{k}\sum_{j=1}^k \lambda_j(\Omega)
    = \sup_{\Lambda>0}\left(\Lambda - \frac{R_\Lambda(\Omega)}{k}\right).
    \end{equation}
    \end{lemma}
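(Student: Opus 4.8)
The statement is a Legendre-type duality identity between the Riesz mean $R_\Lambda(\Omega)$ and the partial eigenvalue sums, and the plan is to prove the two inequalities in \eqref{eq:cesaro-repr} separately: the bound $\sup_{\Lambda>0}\bigl(\Lambda-R_\Lambda(\Omega)/k\bigr)\le A_\Omega(k)$ by an elementary pointwise estimate valid for every $\Lambda$, and the reverse inequality by exhibiting an explicit maximizer.

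For the upper bound, fix $\Lambda>0$. Discarding the nonnegative tail and using $(\Lambda-\lambda_j)_+\ge\Lambda-\lambda_j$ term by term gives
\[
R_\Lambda(\Omega)=\sum_{j\ge1}(\Lambda-\lambda_j)_+\ \ge\ \sum_{j=1}^{k}(\Lambda-\lambda_j)_+\ \ge\ \sum_{j=1}^{k}(\Lambda-\lambda_j)=k\Lambda-\sum_{j=1}^{k}\lambda_j ,
\]
so that $\Lambda-\tfrac1k R_\Lambda(\Omega)\le \tfrac1k\sum_{j=1}^k\lambda_j=A_\Omega(k)$ for every $\Lambda>0$; taking the supremum proves the ``$\le$'' half.

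For the matching lower bound I would test the functional at $\Lambda=\lambda_k$. The key observation is that, by monotonicity of the eigenvalue sequence, $\{\,j:\lambda_j<\lambda_k\,\}\subseteq\{1,\dots,k-1\}$, and since $R_\Lambda$ is defined with the \emph{strict} cutoff $\lambda_j<\Lambda$ (Definition~\ref{def:riesz-mean}), every term of $\sum_{j=1}^k(\lambda_k-\lambda_j)$ with $\lambda_j=\lambda_k$ simply drops out; hence
\[
R_{\lambda_k}(\Omega)=\sum_{\lambda_j<\lambda_k}(\lambda_k-\lambda_j)=\sum_{j=1}^{k}(\lambda_k-\lambda_j)=k\lambda_k-\sum_{j=1}^{k}\lambda_j .
\]
Substituting, $\lambda_k-\tfrac1k R_{\lambda_k}(\Omega)=\tfrac1k\sum_{j=1}^k\lambda_j=A_\Omega(k)$, so the supremum is attained at $\Lambda=\lambda_k$ and is $\ge A_\Omega(k)$. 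Combining the two bounds gives \eqref{eq:cesaro-repr}.

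A more conceptual route, if preferred, is to note that $\Lambda\mapsto\Lambda-R_\Lambda(\Omega)/k$ is continuous and piecewise affine with slope $1-N_\Omega(\Lambda)/k$ on each interval between consecutive eigenvalues; this slope is $\ge 0$ while $N_\Omega(\Lambda)\le k$ and $\le 0$ afterwards, so the maximum is attained on the plateau $[\lambda_k,\lambda_{k+1}]$, where the value is readily computed to equal $A_\Omega(k)$. Either way there is no substantive obstacle: the only point needing a moment's attention is the bookkeeping of eigenvalue multiplicities at $\Lambda=\lambda_k$, which the strict-inequality convention in the definition of $R_\Lambda$ handles automatically.
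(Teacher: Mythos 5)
Your proof is correct, and it is largely a streamlined version of the paper's argument with one genuine difference in the first half. The paper proves the identity by studying $\Phi(\Lambda)=\Lambda-R_\Lambda(\Omega)/k$ globally: it notes that $\Phi$ is concave and piecewise $C^1$, computes $\Phi'(\Lambda)=1-N_\Omega(\Lambda)/k$ away from eigenvalues, deduces that $\Phi$ increases up to $\lambda_k$, is constant on $[\lambda_k,\lambda_{k+1})$, and decreases afterwards, and then evaluates at $\Lambda=\lambda_k$ --- this is exactly the ``more conceptual route'' you sketch at the end. Your primary argument replaces the derivative analysis by the elementary termwise estimate
\begin{equation}
R_\Lambda(\Omega)\;\ge\;\sum_{j=1}^{k}(\Lambda-\lambda_j)_+\;\ge\;k\Lambda-\sum_{j=1}^{k}\lambda_j,
\end{equation}
which yields $\Phi(\Lambda)\le A_\Omega(k)$ for \emph{every} $\Lambda>0$ at once, with no appeal to concavity, monotonicity of $N_\Omega$, or piecewise smoothness; the reverse inequality via evaluation at $\Lambda=\lambda_k$, including the observation that the strict cutoff $\lambda_j<\Lambda$ is harmless because the omitted terms $(\lambda_k-\lambda_j)$ with $\lambda_j=\lambda_k$ vanish anyway, coincides with the paper's step (iv). What your route buys is brevity and robustness (no case analysis at eigenvalue multiplicities); what the paper's route buys is the extra structural information that the supremum is attained precisely on the plateau $[\lambda_k,\lambda_{k+1})$, which is not needed for the lemma itself.
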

    
    \begin{proof}
    Let $(\lambda_j)_{j\ge1}$ be the Dirichlet spectrum in nondecreasing order.
    Define
    \begin{equation}
    \Phi(\Lambda)
    := \Lambda - \frac{1}{k} R_\Lambda(\Omega)
    = \Lambda - \frac{1}{k}\sum_{\lambda_j<\Lambda} (\Lambda-\lambda_j)_+.
    \end{equation}
    
    \textit{(i) Concavity.}
    Each function $\Lambda\mapsto(\Lambda-\lambda_j)_+$ is convex, hence
    $R_\Lambda(\Omega)$ is convex and $\Phi(\Lambda)$ is concave, and piecewise $C^1$.
    
    \textit{(ii) Derivative.}
    For $\Lambda\notin\{\lambda_j\}$,
    \begin{equation}
    \Phi'(\Lambda)
    = 1 - \frac{1}{k}N_\Omega(\Lambda),
    \qquad
    N_\Omega(\Lambda)=\#\{j:\lambda_j<\Lambda\}.
    \end{equation}
    Thus $\Phi'(\Lambda)>0$ if $N_\Omega(\Lambda)<k$,
    $\Phi'(\Lambda)=0$ if $N_\Omega(\Lambda)=k$, and
    $\Phi'(\Lambda)<0$ if $N_\Omega(\Lambda)>k$.
    
    \textit{(iii) Maximizer.}
    Hence $\Phi$ is strictly increasing on $(0,\lambda_k)$,
    constant on $[\lambda_k,\lambda_{k+1})$, and strictly decreasing for
    $\Lambda>\lambda_{k+1}$. Therefore every $\Lambda\in[\lambda_k,\lambda_{k+1})$
    is a maximizer of $\Phi$ on $(0,\infty)$.
    
    \textit{(iv) Value at the maximum.}
    Taking $\Lambda=\lambda_k$, we have
    \begin{equation}
    R_{\lambda_k}(\Omega)
    = \sum_{j=1}^k (\lambda_k-\lambda_j),
    \end{equation}
    since $(\lambda_k-\lambda_j)_+=0$ for all $j>k$.
    Therefore
    \begin{equation}
    \Phi(\lambda_k)
    = \lambda_k - \frac{1}{k}\sum_{j=1}^k(\lambda_k-\lambda_j)
    = \frac{1}{k}\sum_{j=1}^k \lambda_j
    = A_\Omega(k),
    \end{equation}
    which proves \eqref{eq:cesaro-repr}.
    \end{proof} \qed

\subsection{Monotonicity along the flow}

Let $\Omega_t$ be the volume-preserving mean curvature flow started at 
$\Omega_0=\Omega$.
We have shown (Lemma~\ref{lem:key}, Theorem~\ref{thm:corr-all}) that
\begin{equation}\label{eq:Rmon}
\frac{d}{dt} R_\Lambda(\Omega_t) \ge 0 
\qquad \text{for all regular }\Lambda>0,\ t\ge 0,
\end{equation}
with equality for some $t$ only if $\Omega_t$ is a ball.

Let $B$ be the ball of the same volume.
As $t\to\infty$ we have $\Omega_t \to B$ smoothly by Huisken~\cite{huisken87},
and hence
\begin{equation}
\lim_{t\to\infty} R_\Lambda(\Omega_t) = R_\Lambda(B).
\end{equation}

\begin{proposition}[Ordering of Riesz means]
\label{prop:Rorder}
For any smooth bounded convex domain $\Omega\subset\mathbb{R}^d$,
\begin{equation}\label{eq:Rorder}
R_\Lambda(\Omega) \;\le\; R_\Lambda(B)
\qquad\text{for all }\Lambda>0,
\end{equation}
with equality for some $\Lambda$ only if $\Omega$ is a ball.
\end{proposition}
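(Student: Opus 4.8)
The plan is to read off \eqref{eq:Rorder} from the dynamical monotonicity already in hand; the proposition is, in effect, the content of Theorem~\ref{thm:main} phrased for a single $\Lambda$, and its proof amounts to running the volume–preserving mean curvature flow from $\Omega$ and letting $t\to\infty$. Let $\Omega_t$ be the VPMCF solution with $\Omega_0=\Omega$. Since $\Omega$ is smooth, bounded and convex, Huisken's theorem~\cite{huisken87} guarantees that $\Omega_t$ exists for all $t\ge0$, remains smooth and convex, satisfies $|\Omega_t|=|\Omega|$, and converges in $C^\infty$ (indeed exponentially fast) to the ball $B$ with $|B|=|\Omega|$.

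First I would combine the variation formula of Proposition~\ref{prop:riesz-variation}, $\tfrac{d}{dt}R_\Lambda(\Omega_t)=-\mathcal I(\Omega_t,\Lambda)$, with the correlation inequality proved above — Theorem~\ref{thm:2D} in dimension $d=2$ and Theorem~\ref{thm:corr-all} in dimensions $d\ge3$ — to obtain $\mathcal I(\Omega_t,\Lambda)\le0$, hence $\tfrac{d}{dt}R_\Lambda(\Omega_t)\ge0$, for every $t\ge0$. Although the index set $\{k:\lambda_k(t)<\Lambda\}$ may change as eigenvalues cross the level $\Lambda$, each summand $t\mapsto(\Lambda-\lambda_k(t))_+$ is continuous and vanishes at any such crossing, so $t\mapsto R_\Lambda(\Omega_t)$ is continuous and its monotonicity follows from the sign of the a.e.\ derivative; equivalently one may invoke Lemma~\ref{lem:key} directly.

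Next I would pass to the limit. Because $R_\Lambda$ is continuous under $C^\infty$ convergence of domains (each $\lambda_k$ is continuous in the domain and, for fixed $\Lambda$, only finitely many terms are involved, uniformly along the flow), the convergence $\Omega_t\to B$ gives $\lim_{t\to\infty}R_\Lambda(\Omega_t)=R_\Lambda(B)$. Together with monotonicity this yields $R_\Lambda(\Omega)=R_\Lambda(\Omega_0)\le R_\Lambda(\Omega_t)\le R_\Lambda(B)$ for all $t\ge0$, which is \eqref{eq:Rorder}. For the rigidity clause, suppose $R_\Lambda(\Omega)=R_\Lambda(B)$ for some $\Lambda>0$; the sandwich forces $t\mapsto R_\Lambda(\Omega_t)$ to be constant, so $\mathcal I(\Omega_t,\Lambda)=0$ for all $t\ge0$, and the equality cases of Theorems~\ref{thm:2D} and~\ref{thm:corr-all} (equivalently Lemma~\ref{lem:equality-BLY}) then force each $\Omega_t$ — in particular $\Omega=\Omega_0$ — to be a ball.

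The main obstacle is essentially absent at the level of ideas, since all the analytic work has been done in the correlation inequality and in Huisken's convergence theorem; the only genuine point of care is the regularity of $t\mapsto R_\Lambda(\Omega_t)$ at times where an eigenvalue crosses $\Lambda$, which is resolved by the observation that the crossing terms vanish identically there, so no jump occurs and the monotonicity extracted from the derivative is the honest monotonicity of the function.
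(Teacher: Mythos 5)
Your proposal is correct and follows essentially the same route as the paper: the paper's proof simply integrates the monotonicity \eqref{eq:Rmon} (itself obtained from the variation formula together with Theorem~\ref{thm:2D}, respectively Theorem~\ref{thm:corr-all}, via Lemma~\ref{lem:key}) from $t=0$ to $t=\infty$ and uses the smooth convergence $\Omega_t\to B$. Your additional remarks on continuity of $R_\Lambda(\Omega_t)$ at eigenvalue crossings and the rigidity argument via Lemma~\ref{lem:equality-BLY} merely spell out details the paper leaves implicit.
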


\begin{proof}
This follows by integrating \eqref{eq:Rmon} from $t=0$ to $t=\infty$
and using smooth convergence $\Omega_t\to B$. For a general $\Lambda>0$,
approximate by regular values $\Lambda_n\to\Lambda$ and use continuity of
$R_\Lambda(\Omega)$ in $\Lambda$.
\end{proof} \qed

\subsection{A dynamical P\'olya bound}

Using the variational representation \eqref{eq:cesaro-repr} and the ordering 
\eqref{eq:Rorder}, we immediately obtain the following theorem.

\begin{theorem}[Ces\`aro--P\'olya inequality for convex domains]
\label{thm:cesaro-polya}
Let $\Omega\subset\mathbb{R}^d$ be a smooth bounded convex domain and let $B$
be the ball with $|B|=|\Omega|$.
Then for every $k\ge 1$,
\begin{equation}\label{eq:cesaro-polya}
\frac{1}{k}\sum_{j=1}^k \lambda_j(\Omega)
\;\ge\;
\frac{1}{k}\sum_{j=1}^k \lambda_j(B).
\end{equation}
Equivalently,
\begin{equation}
A_\Omega(k) \;\ge\; A_B(k),
\end{equation}
and the inequality is strict unless $\Omega$ is a ball.
\end{theorem}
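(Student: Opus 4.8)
The plan is to deduce Theorem~\ref{thm:cesaro-polya} directly from the variational representation of Cesàro means (Lemma~\ref{lem:cesaro-repr}) together with the ordering of Riesz means (Proposition~\ref{prop:Rorder}). First I would invoke Lemma~\ref{lem:cesaro-repr}, which expresses, for every fixed $k\ge1$,
\[
A_\Omega(k)=\sup_{\Lambda>0}\Bigl(\Lambda-\tfrac{1}{k}R_\Lambda(\Omega)\Bigr),
\qquad
A_B(k)=\sup_{\Lambda>0}\Bigl(\Lambda-\tfrac{1}{k}R_\Lambda(B)\Bigr).
\]
Since $R_\Lambda(\Omega)\le R_\Lambda(B)$ for every $\Lambda>0$ by Proposition~\ref{prop:Rorder}, the function inside the first supremum dominates the one inside the second pointwise in $\Lambda$, hence the suprema satisfy the same inequality: $A_\Omega(k)\ge A_B(k)$. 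This immediately yields \eqref{eq:cesaro-polya} in the form $\frac1k\sum_{j=1}^k\lambda_j(\Omega)\ge\frac1k\sum_{j=1}^k\lambda_j(B)$.

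For the strictness claim I would argue as follows. Suppose $\Omega$ is not a ball. By Proposition~\ref{prop:Rorder} (equivalently, the strict monotonicity in Lemma~\ref{lem:key} combined with Theorem~\ref{thm:corr-all}), we have the strict inequality $R_\Lambda(\Omega)<R_\Lambda(B)$ for \emph{every} $\Lambda>0$. Now fix $k\ge1$ and let $\Lambda_B\in[\lambda_k(B),\lambda_{k+1}(B))$ be a maximizer of $\Lambda\mapsto\Lambda-\tfrac1k R_\Lambda(B)$, which exists by part~(iii) of the proof of Lemma~\ref{lem:cesaro-repr}. Then
\[
A_\Omega(k)\;\ge\;\Lambda_B-\tfrac1k R_{\Lambda_B}(\Omega)
\;>\;\Lambda_B-\tfrac1k R_{\Lambda_B}(B)\;=\;A_B(k),
\]
where the first inequality is the definition of the supremum, the strict inequality uses $R_{\Lambda_B}(\Omega)<R_{\Lambda_B}(B)$ (at the specific energy $\Lambda_B>0$), and the last equality is the value of the maximum computed in Lemma~\ref{lem:cesaro-repr}. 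Hence $A_\Omega(k)>A_B(k)$ whenever $\Omega$ is not a ball, and conversely equality trivially holds when $\Omega=B$.

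The argument is essentially a one-line consequence of the machinery already assembled, so there is no serious analytical obstacle here; the only point requiring a modicum of care is the strictness, where one must select the maximizing $\Lambda_B$ \emph{inside} the open half-line $(0,\infty)$ so that the strict Riesz-mean inequality of Proposition~\ref{prop:Rorder} applies at that value — this is guaranteed since $\lambda_k(B)>0$. All the substance has been pushed into the dynamical monotonicity (Theorem~\ref{thm:main}) and the correlation inequalities (Theorems~\ref{thm:2D} and~\ref{thm:corr-all}), on which the present statement rests.
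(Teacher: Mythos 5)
Your proposal is correct and follows essentially the same route as the paper: the variational representation of Lemma~\ref{lem:cesaro-repr} plus the pointwise ordering $R_\Lambda(\Omega)\le R_\Lambda(B)$ from Proposition~\ref{prop:Rorder}, compared at the level of suprema. Your treatment of strictness (evaluating at a maximizer $\Lambda_B\ge\lambda_k(B)>0$ for the ball and invoking the strict Riesz-mean inequality there) is in fact spelled out more carefully than in the paper, which simply cites Proposition~\ref{prop:Rorder}.
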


\begin{proof}
Let $k\ge1$ be fixed. Using \eqref{eq:cesaro-repr},
\begin{equation}
A_\Omega(k) 
= \sup_{\Lambda>0}\left(\Lambda - \frac{R_\Lambda(\Omega)}{k}\right)
\ge 
\sup_{\Lambda>0}\left(\Lambda - \frac{R_\Lambda(B)}{k}\right)
= A_B(k).
\end{equation}
The inequality holds because $R_\Lambda(\Omega)\le R_\Lambda(B)$ for all $\Lambda>0$
(Proposition~\ref{prop:Rorder}), so $\Lambda - R_\Lambda(\Omega)/k \ge \Lambda - R_\Lambda(B)/k$
for each $\Lambda$, and therefore the supremum on the left is at least the supremum on the right.
Strictness follows from Proposition~\ref{prop:Rorder}
unless the asymmetry of $\Omega$ vanishes.
\end{proof} \qed

\subsection{Sharpness and semiclassical optimality}

The eigenvalues of the ball satisfy the Weyl-type asymptotics
\begin{equation}
A_B(k)
= \frac{d}{d+2}\,C_d\left(\frac{k}{|\Omega|}\right)^{2/d}(1+o(1)),
\qquad k\to\infty.
\end{equation}
Thus Theorem~\ref{thm:cesaro-polya} yields the sharp semiclassical lower bound
\begin{equation}
A_\Omega(k)
\ge
\frac{d}{d+2}\,C_d\left(\frac{k}{|\Omega|}\right)^{2/d}(1+o(1)),
\end{equation}
matching the semiclassical prediction of P\'olya's conjecture.

\section{Conclusion and Outlook}

The argument developed in this work shows that the sharp 
Berezin--Li--Yau inequality can be recovered entirely from the dynamics of 
the volume-preserving mean curvature flow.  The ball enters the picture in 
two independent ways: as the unique stationary point and global attractor 
of the flow \cite{huisken87}, and as the extremizer of the semiclassical 
Riesz mean among domains of fixed volume \cite{berezin,li-yau}.  
The correlation inequality proved in Sections~4--5 bridges these two roles, 
showing that the Riesz mean increases monotonically along the flow and 
therefore converges to its value on the limiting ball.  This yields a 
fully dynamical proof of the Berezin--Li--Yau bound for all smooth convex 
domains.

Beyond providing a new perspective on \eqref{eq:BLY}, the methods used 
here appear to have wider scope.  The same monotonicity mechanism applies 
to other concave trace functionals, including Kr\"oger-type bounds and 
spectral sums with inverse weights, and it extends directly to the 
Ces\`aro--P\'olya inequality for eigenvalue averages established in 
Section~6.  Since the proof combines a near-disk Fourier analysis and the
first-zero exclusion principle in the planar case with the boundary Weyl expansion and a
local spectral rigidity argument near the ball in higher dimensions,
the techniques should adapt to a variety of geometric flows.

It would be natural to explore whether similar dynamical monotonicity 
results hold for non-convex domains, possibly under suitable curvature 
constraints or surgery procedures as in 
\cite{HuiskenSinestrari2009,HaslhoferKleiner2013}.  Another direction is 
to investigate flows that preserve perimeter or mixed volumes, which might 
lead to dynamic proofs of isoperimetric-type spectral inequalities.  
Finally, one may ask whether the present method provides insight into 
higher-order Riesz means, Steklov eigenvalues, or spectral optimization 
problems on manifolds with density.

\appendix
\section*{Appendix}
\section{First-zero exclusion for regular energies}
\label{app:first-zero}

Throughout, $\Omega_t$ evolves by VPMCF, $v:=H-\bar H$, and
$q:=Q_\Lambda-\overline{Q_\Lambda}$.

\noindent\textbf{Roadmap.}
The first-zero exclusion is obtained in four steps:
\begin{enumerate}
\item[(i)] gauge cancellation and commutation of mixed shape derivatives,
\item[(ii)] the resonance identity $T_\Lambda z=G_c$, with
$G=\bar\kappa v^2$ in dimension $d=2$ and $G=2vW_*$ in dimension $d=3$,
\item[(iii)] in dimension $d=2$, exclusion of a nontrivial first zero by a
localized strip detector,
\item[(iv)] in dimension $d=3$, exclusion of flat positive zones and of the
remaining high-gradient branch by localized detectors and coarea.
\end{enumerate}

\noindent\textbf{Compact-core regime.}
Along the compact-core branch we assume uniform convexity and $C^k$ bounds
($k\ge3$) on the flow hypersurfaces. All constants below depend only on these
bounds.

\begin{lemma}[Representation and coercivity of $\mathcal B_{\Lambda,\Sigma}$]
\label{lem:app-B-coercive}
For regular $\Lambda$ there exists a self-adjoint elliptic operator
$T_\Lambda$ of order $1$ on $\partial\Omega$ such that for mean-zero
$f,g\in C^\infty(\partial\Omega)$,
\begin{equation}
\mathcal B_{\Lambda,\Sigma}(f,g)
=
\int_{\partial\Omega} (T_\Lambda f)\,g\,d\sigma .
\end{equation}
In particular, $\mathcal B_{\Lambda,\Sigma}$ is symmetric and coercive on the
mean-zero subspace. Equivalently, it extends to a bounded coercive form on the
Hilbert space $H^{1/2}_0(\partial\Omega)$ (the closure of mean-zero smooth
functions).
\end{lemma}

\begin{proof}
The same projector-gap mechanism as in
Lemma~\ref{lem:gap-square-round} extends to general smooth domains. For a real
Dirichlet eigenbasis $\{e_j\}$ on $\Omega$, with boundary traces
\begin{equation}
\psi_j:=\partial_n e_j|_{\partial\Omega},
\end{equation}
the contour-resolvent differentiation of the spectral projector below
$\Lambda$, combined with the Hadamard--Green identity, gives
\begin{equation}
\mathcal B_{\Lambda,\Sigma}(f,g)
=
2\sum_{\lambda_k<\Lambda}
\sum_{\lambda_j\neq\lambda_k}
\frac{
\left(\int_{\partial\Omega} f\,\psi_j\psi_k\,d\sigma\right)
\left(\int_{\partial\Omega} g\,\psi_j\psi_k\,d\sigma\right)
}{\lambda_j-\lambda_k}.
\end{equation}
As in Lemma~\ref{lem:gap-square-round}, the same-side terms cancel pairwise
because
\begin{equation}
\int_{\partial\Omega} f\,\psi_j\psi_k\,d\sigma
=
\int_{\partial\Omega} f\,\psi_k\psi_j\,d\sigma
\end{equation}
in any real eigenbasis, so only the cross-threshold part remains.

For each $\lambda_k<\Lambda$, let
\begin{equation}
\mathcal K_{k,>\Lambda} h
:=
\sum_{\lambda_j>\Lambda}
\frac{\langle h,\psi_j\rangle_{L^2(\partial\Omega)}}{\lambda_j-\lambda_k}\,\psi_j.
\end{equation}
This is the boundary resolvent at $\mu=\lambda_k$ with the sub-threshold
finite-rank part removed. The boundary resolvent is a classical self-adjoint
elliptic pseudodifferential operator of order $1$ with principal symbol
$|\xi|$; removing finitely many modes does not change the principal symbol;
see, for example, \cite{TaylorPDO,HormanderIII}. Define
\begin{equation}
T_\Lambda
:=
2\sum_{\lambda_k<\Lambda} M_{\psi_k}\,\mathcal K_{k,>\Lambda}\,M_{\psi_k},
\end{equation}
where $M_{\psi_k}$ denotes multiplication by $\psi_k$. Then
\begin{equation}
\mathcal B_{\Lambda,\Sigma}(f,g)
=
\int_{\partial\Omega}(T_\Lambda f)\,g\,d\sigma,
\end{equation}
so the form is represented by a self-adjoint first-order elliptic PDO. Its
principal symbol is
\begin{equation}
\sigma_1(T_\Lambda)(x,\xi)
=
2\sum_{\lambda_k<\Lambda}\psi_k(x)^2\,|\xi|
=
2Q_\Lambda(x)\,|\xi|.
\end{equation}
Since $\Lambda>\lambda_1(\Omega)$ and $\psi_1>0$ on $\partial\Omega$ by Hopf's
lemma, one has $Q_\Lambda(x)\ge \psi_1(x)^2>0$, hence the principal symbol is
strictly positive for $\xi\neq0$. Sharp G{\aa}rding therefore gives
\begin{equation}
\mathcal B_{\Lambda,\Sigma}(f,f)
\ge
c\|f\|_{H^{1/2}}^2-C\|f\|_{L^2}^2,
\end{equation}
and on the mean-zero sector this yields the stated coercive realization on
$H^{1/2}_0(\partial\Omega)$.
\end{proof} \qed

\begin{lemma}[Localized parametrix for $T_\Lambda$]
\label{lem:app-elliptic-transfer}
Let $T_\Lambda$ be as in Lemma~\ref{lem:app-B-coercive}. For any
$\psi\in C_c^\infty(B_r)$ with $r\le r_0$ there exists
$\varphi\in C_c^\infty(B_{2r})$ and a remainder $\mathcal R$ such that
\begin{equation}
T_\Lambda\varphi=\psi+\mathcal R,
\qquad
\|\mathcal R\|_{L^\infty}\le C_\Lambda\,\varepsilon(r)\,\|\psi\|_{L^\infty},
\end{equation}
with $C_\Lambda$ depending only on $\Lambda$ and the compact-core geometry.
Here $\varepsilon(r)\to0$ as $r\to0$.
In particular, for any $z\in C^\infty(\partial\Omega)$,
\begin{equation}
\int_{\partial\Omega} \varphi\,T_\Lambda z\,d\sigma
=
\int_{\partial\Omega} z\,\psi\,d\sigma
\;+\;\int_{\partial\Omega} z\,\mathcal R\,d\sigma .
\end{equation}
\end{lemma}

\begin{proof}
Let $P$ be a parametrix for $T_\Lambda$ and choose $\chi\in C_c^\infty(B_{2r})$
with $\chi\equiv1$ on $B_r$. Set $\varphi:=\chi P\psi$. Then
$T_\Lambda\varphi=\chi T_\Lambda P\psi+[T_\Lambda,\chi]P\psi$. Since
$P$ is a parametrix, one has
\begin{equation}
T_\Lambda P=I+S,
\end{equation}
where $S$ is smoothing, and therefore
\begin{equation}
T_\Lambda\varphi=\psi+\chi S\psi+[T_\Lambda,\chi]P\psi.
\end{equation}
We absorb the last two terms into $\mathcal R$.

The smoothing term $\chi S\psi$ is estimated in $L^\infty$ by standard kernel
bounds, since $S$ has a smooth kernel on the compact core. For the commutator
term, $[T_\Lambda,\chi]$ has order $0$, while $\nabla\chi$ is supported in the
annulus $B_{2r}\setminus B_r$, which is separated from $\operatorname{supp}\psi
\subset B_r$. Hence the Schwartz kernel of $[T_\Lambda,\chi]P$ is sampled only
off the diagonal, where it is smooth, and the corresponding operator norm from
$L^\infty(B_r)$ to $L^\infty(B_{2r})$ tends to zero as $r\to0$. Thus
\begin{equation}
\|\mathcal R\|_{L^\infty}\le C_\Lambda\,\varepsilon(r)\,\|\psi\|_{L^\infty},
\qquad
\varepsilon(r)\to0.
\end{equation}
This is the standard local parametrix construction for first-order elliptic
pseudodifferential operators; see \cite{TaylorPDO,HormanderIII}.
\end{proof} \qed

\begin{lemma}[Two-parameter gauge cancellation]
\label{lem:app-gauge-cancel-2param}
Let $\Omega_{s,t}$ be a smooth two-parameter family with normal velocities
$V_s,V_t$ at $(0,0)$, and let $\varphi_{s,t}$ solve
\begin{equation}
\partial_s^\bullet \varphi_{s,t}=-H_{s,t}V_s\,\varphi_{s,t},
\qquad
\partial_t^\bullet \varphi_{s,t}=-H_{s,t}V_t\,\varphi_{s,t}.
\end{equation}
Then
\begin{equation}
\partial_s\partial_t
\int_{\partial\Omega_{s,t}}Q_\Lambda(\Omega_{s,t})\,\varphi_{s,t}\,d\sigma_{s,t}
=
\int_{\partial\Omega}
\bigl(\partial_s^\bullet\partial_t^\bullet Q_\Lambda(\Omega_{s,t})\bigr)\Big|_{0}\,
\varphi\,d\sigma.
\end{equation}
\end{lemma}

\begin{lemma}[Commutation for regular $\Lambda$]
\label{lem:app-commutation}
If $\Lambda\notin\sigma_D(\Omega_{s,t})$ near $(0,0)$, then
\begin{equation}
\partial_s^\bullet\partial_t^\bullet Q_\Lambda
=
\partial_t^\bullet\partial_s^\bullet Q_\Lambda
\quad\text{at }(0,0).
\end{equation}
\end{lemma}

\begin{proof}
For regular $\Lambda$ the spectral projector below $\Lambda$ depends smoothly
on $(s,t)$, and mixed derivatives commute by standard perturbation theory
\cite{Kato}. The boundary density $Q_\Lambda$ inherits this commutation at
$(0,0)$.
\end{proof} \qed

\begin{lemma}[Balance identity at first zero]
\label{lem:app-balance}
Let $\phi_t$ solve the gauge-transport equation
\begin{equation}
\partial_t^\bullet\phi_t=-H_tV_t\,\phi_t,
\qquad
\phi_{t_*}=v,
\qquad
V_t=-(H_t-\bar H_t),
\end{equation}
and define
\begin{equation}
J(t):=\int_{\partial\Omega_t}Q_\Lambda(\Omega_t)\,\phi_t\,d\sigma_t.
\end{equation}
At a first-zero time $t_*$ for regular $\Lambda$ one has:
\begin{enumerate}
\item[(a)] in dimension $d=2$,
\begin{equation}
(J-\mathcal I_\Lambda)'(t_*)
=
\int_{\partial\Omega_{t_*}} q_s\,v_s\,ds
\;-\;
\bar\kappa\int_{\partial\Omega_{t_*}} q\,v^2\,ds,
\end{equation}
where $v=\kappa-\bar\kappa$ and $q=Q_\Lambda-\bar Q_\Lambda$;
\item[(b)] in dimension $d=3$,
\begin{equation}
(J-\mathcal I_\Lambda)'(t_*)
=
\int_{\partial\Omega_{t_*}}\nabla_\tau q\cdot\nabla_\tau v\,d\sigma
\;+\;
\int_{\partial\Omega_{t_*}} q\,(2vW_*)\,d\sigma.
\end{equation}
\end{enumerate}
\end{lemma}

\begin{proof}
At $t=t_*$ one has $\phi_{t_*}=v$ and $J(t_*)=\mathcal I_\Lambda(t_*)=0$. By
the one-parameter specialization of
Lemma~\ref{lem:app-gauge-cancel-2param},
\begin{equation}
J'(t_*)
=
\int_{\partial\Omega_{t_*}}
\bigl(\partial_t^\bullet Q_\Lambda(\Omega_t)\bigr)\big|_{t=t_*}\,v\,d\sigma.
\end{equation}
Differentiating
\(
\mathcal I_\Lambda(t)=\int_{\partial\Omega_t}Q_\Lambda(\Omega_t)\,(H_t-\bar H_t)\,d\sigma_t
\)
and using the transport identity gives
\begin{equation}
\mathcal I_\Lambda'(t_*)
=
\int_{\partial\Omega_{t_*}}
\Big(
\bigl(\partial_t^\bullet Q_\Lambda(\Omega_t)\bigr)\big|_{t=t_*}\,v
+
Q_\Lambda(\Omega_{t_*})\bigl(\partial_t^\bullet v+H_tV_tv\bigr)\big|_{t=t_*}
\Big)\,d\sigma.
\end{equation}
Subtracting and using that
\(
\int_{\partial\Omega_t}(H_t-\bar H_t)\,d\sigma_t=0
\)
for all $t$, hence
\(
\int_{\partial\Omega_{t_*}}(\partial_t^\bullet v+H_tV_tv)\big|_{t=t_*}\,d\sigma=0,
\)
one may replace $Q_\Lambda$ by its centered part:
\begin{equation}
(J-\mathcal I_\Lambda)'(t_*)
=
-\int_{\partial\Omega_{t_*}}
q\,\bigl(\partial_t^\bullet v+H_tV_tv\bigr)\big|_{t=t_*}\,d\sigma.
\end{equation}

For $d=2$, under the area-preserving curve-shortening flow with
$V=-(\kappa-\bar\kappa)=-v$, one has
\begin{equation}
\partial_t^\bullet\kappa=v_{ss}+\kappa^2v,\qquad
\partial_t^\bullet ds=\kappa v\,ds.
\end{equation}
Since $v=\kappa-\bar\kappa$, the contribution of $\partial_t^\bullet\bar\kappa$
drops out after pairing against the mean-zero function $q$, and therefore
\begin{equation}
\partial_t^\bullet v+H_tV_tv
=
v_{ss}+\kappa^2v-\kappa v^2.
\end{equation}
Using $\kappa=\bar\kappa+v$ and
\(
\int_{\partial\Omega_{t_*}} q\,v\,ds=\mathcal I_\Lambda(t_*)=0,
\)
one gets
\begin{equation}
-(\kappa^2v-\kappa v^2)=-\bar\kappa v^2-\bar\kappa^2 v,
\end{equation}
and the linear term vanishes after integration against $q$. Hence
\begin{equation}
(J-\mathcal I_\Lambda)'(t_*)
=
-\int_{\partial\Omega_{t_*}}q\,v_{ss}\,ds
-\bar\kappa\int_{\partial\Omega_{t_*}} q\,v^2\,ds
=
\int_{\partial\Omega_{t_*}} q_s\,v_s\,ds
-\bar\kappa\int_{\partial\Omega_{t_*}} q\,v^2\,ds.
\end{equation}

For $d=3$, under VPMCF with $V=-v$,
\begin{equation}
\partial_t^\bullet H=\Delta_\tau v+|A|^2v,\qquad
\partial_t^\bullet d\sigma=Hv\,d\sigma.
\end{equation}
Since $v=H-\bar H$, the term $\partial_t^\bullet\bar H$ drops out after
pairing against the mean-zero function $q$, while
\(
H_tV_tv\big|_{t=t_*}=-Hv^2.
\)
Therefore
\begin{equation}
(J-\mathcal I_\Lambda)'(t_*)
=
-\int_{\partial\Omega_{t_*}}q\bigl(\Delta_\tau v+|A|^2v-Hv^2\bigr)\,d\sigma.
\end{equation}
Integrating by parts on the closed surface gives the gradient term. In
dimension three,
\begin{equation}
|A|^2=H^2-2K,\qquad H=\bar H+v,
\end{equation}
so
\begin{equation}
-(|A|^2v-Hv^2)=2Kv-\bar Hv^2-\bar H^2 v.
\end{equation}
Because $\int_{\partial\Omega_{t_*}}q\,v\,d\sigma=\mathcal I_\Lambda(t_*)=0$,
the linear term drops out. Using
\(
W_*=(K-\bar K)-\frac{\bar H}{2}v
\)
and again \(\int q\,v=0\), we obtain
\begin{equation}
2Kv-\bar Hv^2-\bar H^2 v
=
2(K-\bar K)v-\bar Hv^2
=
2vW_*.
\end{equation}
This yields the stated identity.
\end{proof} \qed

\begin{proposition}[Resonance identity]
\label{prop:app-resonance}
At a first-zero surface with regular $\Lambda$ one has:
\begin{enumerate}
\item[(a)] in dimension $d=2$,
\begin{equation}
z:=-v_{ss}-a(v)v,\qquad
a(v):=\frac{\mathcal B_{\Lambda,\Sigma}(-v_{ss},v)}
{\mathcal B_{\Lambda,\Sigma}(v,v)},
\end{equation}
and
\begin{equation}
T_\Lambda z=(\bar\kappa v^2)_c;
\end{equation}
\item[(b)] in dimension $d=3$,
\begin{equation}
z:=-\Delta_\tau v-a(v)v,
\end{equation}
where $W_*:=(K-\bar K)-\frac{\bar H}{2}(H-\bar H)$ and
$a(v)=\mathcal B_{\Lambda,\Sigma}(-\Delta_\tau v,v)/\mathcal B_{\Lambda,\Sigma}(v,v)$,
and
\begin{equation}
T_\Lambda z=(2vW_*)_c,
\end{equation}
where $T_\Lambda$ is the operator from Lemma~\ref{lem:app-B-coercive}.
\end{enumerate}
\end{proposition}

\begin{proof}
Let $\eta$ be mean-zero. Consider a two-parameter family $\Omega_{s,t}$ with
normal velocities $V_s=v$ and $V_t=\eta$ at $(0,0)$, and let $\varphi_{s,t}$ be
the gauge-transported test from Lemma~\ref{lem:app-gauge-cancel-2param}. By
Lemma~\ref{lem:app-commutation}, the mixed material derivatives of
$Q_\Lambda(\Omega_{s,t})$ commute, hence the mixed variation of the centered
pairing is symmetric.

In dimension $d=2$, Lemma~\ref{lem:app-balance} gives
\begin{equation}
(J-\mathcal I_\Lambda)'(t_*)
=
\int q\,(-v_{ss})\,ds-\int q\,(\bar\kappa v^2)\,ds.
\end{equation}
Applying the mixed-variation identity to the centered pairing with velocities
$v$ and $\eta$ identifies the gradient contribution with
$\mathcal B_{\Lambda,\Sigma}(-v_{ss},\eta)$, while the defect term contributes
$\int (\bar\kappa v^2)_c\,\eta\,ds$. Therefore
\begin{equation}
\mathcal B_{\Lambda,\Sigma}(-v_{ss},\eta)
=
\int (\bar\kappa v^2)_c\,\eta\,ds
+
a(v)\mathcal B_{\Lambda,\Sigma}(v,\eta),
\end{equation}
for every mean-zero $\eta$. Writing $-v_{ss}=a(v)v+z$ with
$\mathcal B_{\Lambda,\Sigma}(z,v)=0$, we obtain
\begin{equation}
\mathcal B_{\Lambda,\Sigma}(z,\eta)
=
\int (\bar\kappa v^2)_c\,\eta\,ds
\end{equation}
for all mean-zero $\eta$, hence
\begin{equation}
T_\Lambda z=(\bar\kappa v^2)_c.
\end{equation}

In dimension $d=3$, evaluating at $(0,0)$ and using
Lemma~\ref{lem:app-balance} yields
\begin{equation}
(J-\mathcal I_\Lambda)'(t_*)
=
\int \nabla_\tau q\cdot\nabla_\tau v+\int q\,(2vW_*).
\end{equation}
The gradient term equals $\int q(-\Delta_\tau v)$. Applying the mixed-variation
identity to the centered pairing with velocities $v$ and $\eta$ identifies the
gradient contribution with $\mathcal B_{\Lambda,\Sigma}(-\Delta_\tau v,\eta)$,
while the defect term contributes $\int (2vW_*)_c\,\eta$. Therefore
\begin{equation}
\mathcal B_{\Lambda,\Sigma}(-\Delta_\tau v,\eta)
=
\int (2vW_*)_c\,\eta
+
a(v)\mathcal B_{\Lambda,\Sigma}(v,\eta),
\end{equation}
for every mean-zero $\eta$. Writing $-\Delta_\tau v=a(v)v+z$ with
$\mathcal B_{\Lambda,\Sigma}(z,v)=0$, we obtain
$\mathcal B_{\Lambda,\Sigma}(z,\eta)=\int (2vW_*)_c\,\eta$ for all
mean-zero $\eta$. By Riesz representation in the
$\mathcal B_{\Lambda,\Sigma}$ inner product (on the Hilbert space closure of
mean-zero smooth functions with respect to $\mathcal B_{\Lambda,\Sigma}$), this implies
$T_\Lambda z=(2vW_*)_c$.
\end{proof} \qed

\begin{remark}
The quantity $z$ is defined geometrically, independently of the resonance
relation:
\begin{equation}
z:=
\begin{cases}
-v_{ss}-a(v)v, & d=2,\\
-\Delta_\tau v-a(v)v, & d=3.
\end{cases}
\end{equation}
Proposition~\ref{prop:app-resonance} then provides the operator equation
\begin{equation}
T_\Lambda z=
\begin{cases}
(\bar\kappa v^2)_c, & d=2,\\
(2vW_*)_c, & d=3.
\end{cases}
\end{equation}
Accordingly, the detector lemmas use the geometric formula for $z$, while
contradiction is derived by pairing the operator identity against localized
tests transferred by Lemma~\ref{lem:app-elliptic-transfer}. Since the defect
$G_c-T_\Lambda z$ is centered, the transferred test may be centered without
changing the pairing.
\end{remark}

\begin{remark}[Dimensional extension]
The appendix gives the full first-zero package in dimensions $d=2$ and $d=3$.
For $d\ge4$ one replaces $K$ by the second elementary symmetric function
$\sigma_2$, using the identity $|A|^2=H^2-2\sigma_2$ in the balance lemma, and
uses the analogous rigidity for closed strictly convex hypersurfaces with one
constant principal curvature; the detector arguments are unchanged once the
resonance identity is available in that setting.
\end{remark}

\begin{lemma}[Planar strip detector]
\label{lem:app-planar-strip-detector}
Let $\Sigma=\partial\Omega_{t_*}\subset\mathbb R^2$ be a first-zero curve with
regular $\Lambda$, and assume
\begin{equation}
T_\Lambda z=(\bar\kappa v^2)_c,
\qquad
z:=-v_{ss}-a(v)v,
\qquad
v:=\kappa-\bar\kappa.
\end{equation}
If $v\not\equiv0$, then there exist a regular value
\begin{equation}
t_0\in\mathbb R,\qquad t_0^2>\overline{v^2},
\end{equation}
and $h_0>0$ such that for every $0<h<h_0$ there exists a centered test
function $\varphi_h$ with
\begin{equation}
\int_\Sigma \varphi_h\,\bigl((\bar\kappa v^2)_c-T_\Lambda z\bigr)\,ds>0.
\end{equation}
In particular, under the resonance identity no nontrivial planar first zero can
occur.
\end{lemma}

\begin{proof}
Since $v$ is smooth, mean-zero, and not identically zero on the connected
curve $\Sigma$, one has
\begin{equation}
\|v\|_{L^\infty(\Sigma)}^2>\overline{v^2}.
\end{equation}
Otherwise $v^2$ would be constant on $\Sigma$, forcing constant sign and
contradicting $\int_\Sigma v\,ds=0$. Therefore at least one of
\begin{equation}
\max_\Sigma v>\sqrt{\overline{v^2}}
\qquad\text{or}\qquad
\min_\Sigma v<-\sqrt{\overline{v^2}}
\end{equation}
holds. Choose accordingly a regular value $t_0$ such that
\begin{equation}
t_0^2>\overline{v^2},
\qquad
\text{and either }
t_0\in\bigl(\sqrt{\overline{v^2}},\,\max_\Sigma v\bigr)
\text{ or }
t_0\in\bigl(\min_\Sigma v,\,-\sqrt{\overline{v^2}}\bigr).
\end{equation}
Fix $x_0\in\Sigma$ with $v(x_0)=t_0$. Since $t_0$ is regular, there exist
$r_0,h_0,g_0,G_0>0$ such that in an arclength chart
$B_{r_0}(x_0)\subset\Sigma$ one has
\begin{equation}
g_0\le |v_s|\le G_0
\quad\text{on}\quad
B_{r_0}(x_0)\cap I_{h_0}(t_0),
\end{equation}
where
\begin{equation}
I_h(t_0):=
\begin{cases}
\{t_0\le v\le t_0+h\}, & t_0>0,\\
\{t_0-h\le v\le t_0\}, & t_0<0.
\end{cases}
\end{equation}
Hence for every $0<h<h_0$,
\begin{equation}
\left|B_{r_0}(x_0)\cap I_h(t_0)\right|\asymp h,
\end{equation}
with constants depending only on $g_0,G_0$.

Choose $\chi_h\in C_c^\infty(\mathbb R)$ with
\begin{equation}
\begin{aligned}
0&\le\chi_h\le1,\\
\operatorname{supp}\chi_h&\subset
\begin{cases}
[t_0,t_0+h], & t_0>0,\\
[t_0-h,t_0], & t_0<0,
\end{cases}\\
\chi_h'&\le-\frac{c}{h}
\quad\text{on the middle third of its support.}
\end{aligned}
\end{equation}
Let $\zeta\in C_c^\infty(B_{r_0}(x_0))$ satisfy $\zeta\equiv1$ on the smaller
arc where $I_{h_0}(t_0)$ holds. Set
\begin{equation}
\psi_h:=\zeta^2\chi_h(v),
\end{equation}
and let $\varphi_h$ be given by Lemma~\ref{lem:app-elliptic-transfer}, so that
\begin{equation}
T_\Lambda\varphi_h=\psi_h+\mathcal R_h,
\qquad
\|\mathcal R_h\|_{L^\infty}\le
C_\Lambda\,\varepsilon(r_0)\,\|\psi_h\|_{L^\infty}.
\end{equation}
By the detector convention above,
\begin{equation}
\int_\Sigma \varphi_h\,\bigl((\bar\kappa v^2)_c-T_\Lambda z\bigr)\,ds
=
\int_\Sigma \psi_h\,(\bar\kappa v^2)_c\,ds
-\int_\Sigma z\,\psi_h\,ds
+O(\varepsilon(r_0)\,h).
\end{equation}

Write
\begin{equation}
\mu:=\overline{\bar\kappa v^2}=\bar\kappa\,\overline{v^2}.
\end{equation}
Since $t_0^2>\overline{v^2}$ and $\psi_h$ is supported where $v^2\ge t_0^2$,
\begin{equation}
\int_\Sigma \psi_h\,(\bar\kappa v^2)_c\,ds
\ge
(\bar\kappa t_0^2-\mu)\int_\Sigma \psi_h\,ds
\ge c_1 h.
\end{equation}

For the $z$-term,
\begin{equation}
\int_\Sigma z\,\psi_h\,ds
=
\int_\Sigma \zeta^2\chi_h'(v)\,v_s^2\,ds
+
2\int_\Sigma \zeta\chi_h(v)\,\zeta_s v_s\,ds
-a(v)\int_\Sigma \zeta^2\chi_h(v)\,v\,ds.
\end{equation}
On the middle substrip inside $B_{r_0}(x_0)$ one has $\zeta=1$ and
$|v_s|\ge g_0$, so
\begin{equation}
\int_\Sigma \zeta^2\chi_h'(v)\,v_s^2\,ds\le -c_0<0,
\end{equation}
with $c_0$ independent of $h$. The remaining two terms are supported in a set
of arclength $O(h)$, hence
\begin{equation}
\left|
2\int_\Sigma \zeta\chi_h(v)\,\zeta_s v_s\,ds
\right|
+
\left|
a(v)\int_\Sigma \zeta^2\chi_h(v)\,v\,ds
\right|
\le Ch.
\end{equation}
Therefore
\begin{equation}
\int_\Sigma z\,\psi_h\,ds\le -c_0+Ch.
\end{equation}

Combining the two bounds yields
\begin{equation}
\int_\Sigma \varphi_h\,\bigl((\bar\kappa v^2)_c-T_\Lambda z\bigr)\,ds
\ge
c_0+\bigl(c_1-C\bigr)h+O(\varepsilon(r_0)\,h)>0
\end{equation}
for $h$ sufficiently small, contradiction under resonance.
\end{proof} \qed

\begin{lemma}[Rigidity of the corrected defect]
\label{lem:app-Wstar-rigidity}
Let $\Sigma\subset\mathbb R^3$ be smooth, closed, and strictly convex. If
$W_*:=(K-\bar K)-\frac{\bar H}{2}(H-\bar H)\equiv0$ on $\Sigma$, then $\Sigma$
is a sphere.
\end{lemma}

\begin{proof}
The identity $W_*\equiv0$ is equivalent to
$K=\frac{\bar H}{2}H+(\bar K-\frac{\bar H^2}{2})$. Using
$K=\frac{H^2-2|A^\circ|^2}{4}$ gives
$2|A^\circ|^2=(H-\bar H)^2+(\bar H^2-4\bar K)$. At an umbilic point
$|A^\circ|=0$, hence $\bar H^2\le 4\bar K$. If $\bar H^2<4\bar K$, then
$(H-\bar H)^2\ge 4\bar K-\bar H^2>0$ everywhere, so $H-\bar H$ does not change
sign, contradicting $\int_\Sigma (H-\bar H)=0$. Thus $\bar H^2=4\bar K$ and
$(\kappa_1-\kappa_2)^2=(H-\bar H)^2$, which implies that one principal curvature
equals $\bar H/2$ everywhere. By the classical rigidity of closed strictly
convex surfaces with one constant principal curvature, $\Sigma$ is a sphere;
see, e.g., Hopf~\cite{Hopf1983}.
\end{proof} \qed

\begin{lemma}[Flat-zone detector]
\label{lem:app-flat-detector}
Let $\Sigma=\partial\Omega_{t_*}$ be a first-zero surface and set
$v:=H-\bar H$, $z:=-\Delta_\tau v-a(v)v$, $\mu:=\overline{2vW_*}$. Suppose there
exist $\eta,\omega,L,r_0>0$ and a geodesic ball $B_r(x_0)\subset\Sigma$ with
$r\ge r_0$ such that
\begin{equation}
v\ge \eta,\qquad W_*\ge \omega \quad\text{on }B_r(x_0),
\qquad
|\nabla_\tau v|\le L \quad\text{on }B_r(x_0).
\end{equation}
If
\begin{equation}
\frac{CL}{r_0}+C_\Lambda\varepsilon(r)<a(v)\eta+2\eta\omega-|\mu|,
\end{equation}
where $C$ depends only on the compact-core geometry, then there exists a
centered $\varphi$ with
\begin{equation}
\int_\Sigma \varphi\,\bigl((2vW_*)_c-T_\Lambda z\bigr)\,d\sigma>0.
\end{equation}
In particular, under the resonance identity no such ball can exist.
\end{lemma}

\begin{proof}
Let $\psi\in C_c^\infty(B_r(x_0))$, $\psi\ge0$, $\int\psi=1$, and let
$\varphi$ be given by Lemma~\ref{lem:app-elliptic-transfer}, so that
$T_\Lambda\varphi=\psi+\mathcal R$ with $\|\mathcal R\|_{L^\infty}\le C_\Lambda\,\varepsilon(r)\,\|\psi\|_{L^\infty}$.
By the detector convention above,
\begin{equation}
\int_\Sigma \varphi\bigl((2vW_*)_c-T_\Lambda z\bigr)
=
\int_\Sigma \psi(2vW_*)_c-\int_\Sigma z\,\psi
 + O(\varepsilon(r)).
\end{equation}
On $\operatorname{supp}\psi$ one has $2vW_*\ge 2\eta\omega$, hence
$\int \psi(2vW_*)_c\ge 2\eta\omega-|\mu|$. The term $\int z\,\psi$ is treated as
in the $L^2$ case, yielding
\begin{equation}
\int z\,\psi=\int \nabla_\tau\psi\cdot\nabla_\tau v-a(v)\int \psi v,
\end{equation}
and the assumptions give $\int z\,\psi\le CL/r_0-a(v)\eta$.
Combining the two bounds and absorbing the $O(\varepsilon(r))$ term into the
constant gives the claim.
\end{proof} \qed

\begin{lemma}[High-gradient alternative]
\label{lem:app-high-grad-alt}
Let $V=\{v\ge\eta,\ W_*\ge\omega\}$ and assume
$a(v)\eta+2\eta\omega-|\mu|-C_\Lambda\varepsilon(r_0)>0$, where
$\mu=\overline{2vW_*}$. Set
\begin{equation}
L_0:=\frac{r_0}{C}\bigl(a(v)\eta+2\eta\omega-|\mu|-C_\Lambda\varepsilon(r_0)\bigr).
\end{equation}
If the flat-zone hypothesis of Lemma~\ref{lem:app-flat-detector} fails on
every ball $B_{r_0}\subset V$, then each such ball contains a point with
$|\nabla_\tau v|\ge L_0$.
\end{lemma}

\begin{proof}
Otherwise some $B_{r_0}\subset V$ would satisfy
$\sup_{B_{r_0}}|\nabla_\tau v|<L_0$, which by Lemma~\ref{lem:app-flat-detector}
would produce a centered test with
$\int_\Sigma \varphi((2vW_*)_c-T_\Lambda z)\,d\sigma>0$, contradicting
resonance.
\end{proof} \qed

\begin{lemma}[High-gradient concentration]
\label{lem:app-high-grad-conc}
Let $V:=\{v\ge \eta,\ W_*\ge\omega\}$ and assume $|V^{(2r_0)}|>0$, where
$V^{(2r_0)}:=\{x\in V:\ \operatorname{dist}_\Sigma(x,\Sigma\setminus V)\ge 2r_0\}$.
Suppose every geodesic ball $B_{r_0}(x)\subset V$ contains a point where
$|\nabla_\tau v|\ge L_0$, where $L_0$ is as in
Lemma~\ref{lem:app-high-grad-alt}, and assume
$\|\nabla_\tau^2 v\|_{L^\infty}\le M$.
Then there exists $E\subset V$ with
\begin{equation}
|\nabla_\tau v|\ge \frac{L_0}{2}\quad\text{on }E,
\qquad
|E|\ge c\,\frac{\rho^2}{r_0^2}\,|V^{(2r_0)}|,
\end{equation}
where $\rho:=\min\{r_0/2,L_0/(4M)\}$ and $c>0$ depends only on the compact-core
geometry.
\end{lemma}

\begin{proof}
If $|\nabla_\tau v(y)|\ge L_0$, the Hessian bound implies
$|\nabla_\tau v|\ge L_0/2$ on $B_\rho(y)$. Choose a maximal disjoint family of
balls $B_{2r_0}(x_i)\subset V^{(2r_0)}$; standard surface Vitali covering gives
$N\ge c_1|V^{(2r_0)}|/r_0^2$. For each $i$ pick $y_i\in B_{r_0}(x_i)$ with
$|\nabla_\tau v(y_i)|\ge L_0$, so the balls $B_\rho(y_i)$ are disjoint subsets
of $\{|\nabla_\tau v|\ge L_0/2\}\cap V$. Summing their areas yields the claim.
\end{proof} \qed

\begin{lemma}[High-gradient strip detector]
\label{lem:app-strip-detector}
Let $\Sigma=\partial\Omega_{t_*}$ be a first-zero surface with compact-core
geometry bounds, and assume there exist $\eta<t_1$, $\omega,g_0,m_0>0$ and a
measurable set
\begin{equation}
E\subset\{\eta\le v\le t_1,\ W_*\ge\omega,\ |\nabla_\tau v|\ge g_0\},
\qquad |E|\ge m_0.
\end{equation}
Then there exist a strip $I=[t_0,t_0+h]\subset[\eta,t_1]$ and a centered test
function $\varphi$ such that
\begin{equation}
\int_\Sigma \varphi\,\bigl((2vW_*)_c-T_\Lambda z\bigr)\,d\sigma>0.
\end{equation}
In particular, under the resonance identity no such set $E$ can exist.
\end{lemma}

\begin{proof}
By coarea/pigeonhole there is $I=[t_0,t_0+h]$ with
$|E\cap\{t_0\le v\le t_0+h\}|\ge c_E h$. Choose $\chi\in C_c^\infty(I)$ with
$0\le\chi\le1$ and $\chi'\le -c/h$ on a substrip, and a cutoff
$\zeta\in C_c^\infty(\{W_*\ge \omega/2\})$ with $\zeta\equiv1$ on $E$. Set
$\psi:=\zeta^2\chi(v)$ and let $\varphi$ be given by
Lemma~\ref{lem:app-elliptic-transfer}, so that
$T_\Lambda\varphi=\psi+\mathcal R$ with $\|\mathcal R\|_{L^\infty}\le C_\Lambda\,\varepsilon(r_0)\,\|\psi\|_{L^\infty}$.
By the detector convention above,
\begin{equation}
\int \varphi\bigl((2vW_*)_c-T_\Lambda z\bigr)
=
\int \psi(2vW_*)_c-\int z\,\psi
 + O(\varepsilon(r_0)\,h).
\end{equation}
$\psi=\zeta^2\chi(v)$ is supported in a strip of width $h$, hence
$|\operatorname{supp}\psi|=O(h)$ on the compact core. Moreover,
\begin{equation}
\int z\,\psi
=
\int \zeta^2\chi'(v)|\nabla_\tau v|^2
 + C_1 h
\le
-c_0+C_1 h,
\end{equation}
while
\begin{equation}
\int \zeta^2\chi(v)\,(2vW_*)_c
\ge
c_2 h-C_3 h.
\end{equation}
For $h$ small this yields the stated positivity for
$\int \varphi\,\bigl((2vW_*)_c-T_\Lambda z\bigr)$.
\end{proof} \qed

\begin{theorem}[First-zero exclusion for regular $\Lambda$]
\label{thm:app-first-zero}
If $\mathcal I_\Lambda(\Omega_t)$ has a first zero at $t=t_*>0$ with regular
$\Lambda$, then $\Omega_{t_*}$ is round. Hence no nontrivial first zero occurs.
\end{theorem}

\begin{proof}
Near the round model, $\mathcal I_\Lambda<0$ by the local near-disk Fourier
analysis in Section~4 for $d=2$ and by Proposition~\ref{prop:local-neg-ball}
for $d=3$, so no first zero can occur once the flow enters the corresponding
near-disk or near-ball neighbourhood.

If $d=2$, Proposition~\ref{prop:app-resonance} gives
$T_\Lambda z=(\bar\kappa v^2)_c$.
If $\Omega_{t_*}$ is not a disk, then $v=\kappa-\bar\kappa\not\equiv0$, and
Lemma~\ref{lem:app-planar-strip-detector} yields a centered test contradicting
the resonance identity. Hence $\Omega_{t_*}$ must be a disk.

If $d=3$, any first zero must occur in the compact core. By
Proposition~\ref{prop:app-resonance} the resonance identity
$T_\Lambda z=(2vW_*)_c$
holds there. If $W_*\equiv0$, Lemma~\ref{lem:app-Wstar-rigidity} implies
that $\Omega_{t_*}$ is a ball, so we may assume $W_*$ changes sign. If the
positive part of $W_*$ occupies a
region where $v$ is sufficiently flat, Lemma~\ref{lem:app-flat-detector}
contradicts resonance. Otherwise the positive part of $W_*$ can survive only in
regions where $|\nabla_\tau v|$ is uniformly large. By
Lemma~\ref{lem:app-high-grad-conc} this yields a set $E$ of positive measure
where $|\nabla_\tau v|$ is bounded below, and then
Lemma~\ref{lem:app-strip-detector} gives the same contradiction. Therefore no
nontrivial first zero can occur, and $\Omega_{t_*}$ must be a ball.
\end{proof} \qed


\begin{thebibliography}{99}

\bibitem{huisken87}
G.~Huisken,
\textit{The volume preserving mean curvature flow},
J. Reine Angew. Math. \textbf{382} (1987), 35--48.
   \doi{10.1515/crll.1987.382.35}

\bibitem{gage86}
M.~Gage,
\textit{On an area-preserving evolution equation for plane curves},
Contemp. Math. \textbf{51} (1986), 51--62.

\bibitem{berezin}
F.~A.~Berezin,
\textit{Covariant and contravariant symbols of operators},
   Math. USSR-Izv. (Izv. Akad. Nauk SSSR Ser. Mat.) \textbf{6:5} (1972),
   1117--1151 (English version), 1134--1167 (Russian original).
   \doi{10.1070/IM1972v006n05ABEH001913}

\bibitem{li-yau}
P.~Li, S.-T.~Yau,
\textit{On the Schr\"odinger equation and the eigenvalue problem},
Comm. Math. Phys. \textbf{88} (1983), 309--318.
   \doi{10.1007/BF01213210} 

\bibitem{HaslhoferKleiner2013}
R.~Haslhofer and B.~Kleiner,
\textit{Mean curvature flow of mean convex hypersurfaces},
   \href{https://arxiv.org/abs/1304.0926}{arXiv:1304.0926}, 2013.

\bibitem{HuiskenSinestrari2009}
G.~Huisken and C.~Sinestrari,
\textit{Mean curvature flow with surgeries of two-convex hypersurfaces},
Invent. Math. \textbf{175} (2009), 137--221.
   \doi{10.1007/s00222-008-0148-4}

   
   \bibitem{BransonGilkey1990}
   T.~P.~Branson and P.~B.~Gilkey,
\textit{The asymptotics of the Laplacian on a manifold with boundary},
   Comm. Partial Differential Equations \textbf{15} (1990), 245--272.
   \doi{10.1080/03605309908820686}

 

\bibitem{SV97}
Yu.~Safarov and D.~Vassiliev,
\textit{The Asymptotic Distribution of Eigenvalues of Partial Differential Operators},
   Transl. Math. Monogr., vol. 155, Amer. Math. Soc., 1997.
   \doi{10.1090/mmono/155}

\bibitem{FrankLarson2025}
R.~L.~Frank and S.~Larson,
\textit{Riesz means asymptotics for Dirichlet and Neumann Laplacians on Lipschitz domains},
Invent. Math. \textbf{241} (2025), 999--1079.
   \doi{10.1007/s00222-025-01352-x}

\bibitem{Cheng1976}
S.~Y.~Cheng,
\textit{Eigenfunctions and nodal sets},
Comment. Math. Helv. \textbf{51} (1976), 43--55.
   \doi{10.1007/BF02568142}


\bibitem{Henrot-Pierre}
A.~Henrot and M.~Pierre,
\textit{Shape Variation and Optimization: A Geometrical Analysis},
   EMS Tracts in Mathematics, vol.~28, European Mathematical Society (EMS), Z\"urich, 2018.
   \doi{10.4171/178}

\bibitem{Escobar1988}
J.~F.~Escobar,
\textit{Sharp constant in the Sobolev trace inequality},
Indiana Univ. Math. J. \textbf{37} (1988), 687--698.
\doi{10.1512/iumj.1988.37.37033}

\bibitem{Hopf1983}
H.~Hopf,
\textit{Differential Geometry in the Large},
Lecture Notes in Math., vol.~1000, Springer, Berlin, 1983.
\doi{10.1007/978-3-662-21563-0}


\bibitem{Kato}
T.~Kato,
\textit{Perturbation Theory for Linear Operators},
2nd edition, Grundlehren Math. Wiss., vol.~132, Springer, Berlin, 1976.
\doi{10.1007/978-3-642-66282-9}

\bibitem{TaylorPDO}
M.~E.~Taylor,
\textit{Pseudodifferential Operators},
Princeton Mathematical Series, vol.~34, Princeton University Press, Princeton, NJ, 1981.

\bibitem{HormanderIII}
L.~H\"ormander,
\textit{The Analysis of Linear Partial Differential Operators III},
Classics in Mathematics, Springer, Berlin, 2007.
\doi{10.1007/978-3-540-49938-1}



\end{thebibliography}
\end{document}